\numberwithin{equation}{section}
\newcommand{\GL}{\operatorname{GL}}
\newcommand{\PGL}{\operatorname{PGL}}
\newcommand{\SL}{\operatorname{SL}}
\newcommand{\PSL}{\operatorname{PSL}}
\newcommand{\Sp}{\operatorname{Sp}}
\newcommand{\End}{\operatorname{End}}
\newcommand{\Aut}{\operatorname{Aut}}
\renewcommand{\sl}{\operatorname{\mathfrak{sl}}}
\newcommand{\so}{\operatorname{\mathfrak{so}}}
\newcommand{\su}{\operatorname{\mathfrak{su}}}
\renewcommand{\sp}{\operatorname{\mathfrak{sp}}}
\DeclareMathOperator{\Hom}{Hom}
\DeclareMathOperator{\Id}{Id}
\DeclareMathOperator{\hol}{hol}
\DeclareMathOperator{\Fix}{Fix}
\DeclareMathOperator{\Is}{Is}
\newcommand{\R}{\mathbb{R}}
\newcommand{\C}{\mathbb{C}}
\renewcommand{\H}{\mathbb H}
\newcommand{\cB}{\mathcal{B}}
\newcommand{\cG}{\mathcal{G}}
\newcommand{\bK}{\mathbb{K}}
\newcommand{\goe}{\mathfrak{e}}
\newcommand{\gof}{\mathfrak{f}}
\newcommand{\gog}{\mathfrak{g}}
\newcommand{\goh}{\mathfrak{h}}
\newcommand{\gok}{\mathfrak{k}}
\newcommand\subsetsim{\mathrel{%
\ooalign{\raise0.2ex\hbox{$\subset$}\cr\hidewidth\raise-0.8ex\hbox{\scalebox{0.9}{$\sim$}}\hidewidth\cr}}}
\newcommand{\invol}[2]{\draw[latex-latex] (root #1) to [out=-60,in=-120] (root #2);}
\newcommand{\invold}[2]{\draw[latex-latex] (root #1) to [out=-60,in=60] (root #2);}
\newtheorem{theorem}{Theorem}[section]
\newtheorem{corollary}[theorem]{Corollary}
\newtheorem{lemma}[theorem]{Lemma}
\theoremstyle{definition}
\newtheorem{definition}[theorem]{Definition}
\newtheorem{remark}[theorem]{Remark}
\newtheorem{example}[theorem]{Example}
\newlist{enumthm}{enumerate}{1} % set up a dedicated enumeration env.
\setlist[enumthm]{label=\upshape(\alph*),ref=\upshape\thetheorem(\alph*)}
\newlist{enumprop}{enumerate}{1} % set up a second dedicated enumeration env.
\setlist[enumprop]{label=\upshape(\alph*),ref=\upshape\theproposition(\alph*)}
\newlist{enumlem}{enumerate}{1} % set up a third dedicated enumeration env.
\setlist[enumlem]{label=\upshape(\alph*),ref=\upshape\thelemma(\alph*)}
\newlist{enumcor}{enumerate}{1} % set up a fourth dedicated enumeration env.
\setlist[enumcor]{label=\upshape(\alph*),ref=\upshape\thecorollary(\alph*)}
\newlist{enumdef}{enumerate}{1} % set up a fifth dedicated enumeration env.
\setlist[enumdef]{label=\upshape(\alph*),ref=\upshape\thedefinition(\alph*)}
\author{Anton Hase}
\address{Mathematics Department, Technion, Haifa 320003, Israel}
\email{hase@campus.technion.ac.il}
\title{Integrability of quaternion-K\"ahler symmetric spaces}
\begin{document}

\begin{abstract}
We find a necessary condition for the existence of an action of a Lie group $G$ by quaternionic automorphisms on an integrable quaternionic manifold in terms of representations of $\gog$. We check this condition and prove that a Riemannian symmetric space of dimension $4n$ for $n\geq 2$ has an invariant integrable almost quaternionic structure if and only if it is quaternionic vector space, quaternionic hyperbolic space or quaternionic projective space.
\end{abstract}

\maketitle

\section{Introduction}\label{sec:introduction}

Hermitian symmetric spaces are well-studied objects at the crossroads of Lie theory, differential geometry and complex analysis. A Hermitian symmetric space $M^{2n}=G/K$ is a Riemannian symmetric space together with a $G$-invariant almost complex structure. One can show that any such $M$ is K\"ahler, and hence the $G$-invariant almost complex structure is automatically integrable. Hermitian symmetric spaces of non-compact type can even be realized as bounded domains in complex vector spaces as shown by Borel and Harish-Chandra. (For an overview on Hermitian symmetric spaces see Chapter VIII in \cite{Helgason2001}.) 
 
If we replace the complex numbers $\C$ by the division algebra of quaternions $\H$ in the definition of Hermitian symmetric spaces, then we obtain Riemannian symmetric spaces $M^{4n}=G/K$ with $G$-invariant almost quaternionic structures. For $n\geq 2$, one can still characterize these spaces by their holonomy.

\begin{restatable*}{proposition}{almostqsymmetricimpliesqK}\label{prop:almostqsymmetricimpliesqK}
A Riemannian symmetric space $G/K$ of dimension $4n$ for $n\geq 2$ has a $G$-invariant almost quaternionic structure if and only if it is quaternion-K\"ahler.
\end{restatable*} 
While quaternion-K\"ahler symmetric spaces emerge in an analogous way to Hermitian symmetric spaces, the analogy does not reach very far as far as integrability is concerned. We will show the following theorem: \begin{restatable*}{theorem}{symmetricintegrable}\label{thm:symmetricintegrable}
A quaternion-K\"ahler symmetric space is integrable if and only if it is quaternionic vector space $\H^n$, quaternionic hyperbolic space $H^n_\H$ or quaternionic projective space $\mathbb{P}(\H^{n+1})$.
\end{restatable*} This was previously only known for compact quaternion-K\"ahler symmetric spaces (\cref{cor:compactcase}). This proof can not be extended to the non-compact case, since it uses topological obstructions which vanish for all non-compact quaternion-K\"ahler symmetric spaces.

One approach to decide the question of integrability for almost quaternionic manifolds is the following: Cartan's equivalence method provides structural invariants of a $\cG$-structure of finite type that determine the $\cG$-structure. For almost quaternionic manifolds this boils down to the torsion of the almost quaternionic structure and an analogue of the Weyl tensor, see \cite{Salamon1986}. We will follow a different path here. Instead of using the Riemannian structure (or a torsion-free connection), we will focus on the invariance of the almost quaternionic structure on a quaternion-K\"ahler symmetric space. Our approach has the advantage, that we can treat other homogeneous spaces at the same time. For example, we will also show: \begin{restatable*}{corollary}{exceptional}\label{cor:exceptional}
No real form of $E_6$, $E_8$, $F_4$ or $G_2$ can act transitively by quaternionic morphisms on a connected integrable quaternionic manifold of dimension $n\geq 1$.
\end{restatable*} The situation for $E_7$ is more complicated, but we can still show: \begin{restatable*}{corollary}{affineclassical}\label{cor:affineclassical}
If an irreducible affine symmetric space $G/H$ admits an invariant integrable almost quaternionic structure, then $G$ is of classical type.
\end{restatable*}

%This gives us one way of proving \cref{thm:symmetricintegrable}, but Instead of trying to compute this tensor for every non-compact quaternion-K\"ahler symmetric space, 

This article is organized as follows: In \cref{sec:preliminaries} we will review the definitions of different quaternionic structures. Then we will see which of them apply to Riemannian symmetric spaces. This will lead us to the classification of quaternion-K\"ahler symmetric spaces by Wolf (see \cite{Wolf1965}), which we will give in \cref{table:noncompqKsymspace}.

In \cref{sec:Kulkarni} we will pass from integrable quaternionic manifolds to manifolds with a $(\PGL_{n+1}(\H),\mathbb{P}(\H^{n+1}))$-structure using a theorem of Kulkarni:\begin{restatable*}[Theorem 3.4. in \cite{Kulkarni1978}]{theorem}{kulkarni}\label{thm:kulkarniquaternion}
An almost quaternionic structure on a smooth manifold $M^{4n}$ is integrable if and only if it is compatible with a $(\PGL_{n+1}(\H),\mathbb{P}(\H^{n+1}))$-structure on $M^{4n}$. 
\end{restatable*} We will see in \cref{lem:equivmorphism} that an action of a Lie group $G$ on an integrable quaternionic manifold $M$ by quaternionic automorphisms $M$ is an action by $(\PGL_{n+1}(\H),\mathbb{P}(\H^{n+1}))$-morphisms on $M$. We use this to get the following necessary condition for the existence of an action of a Lie group $G$ by quaternionic automorphisms on a connected integrable quaternionic manifold in terms of representations of $\gog$. \begin{restatable*}{theorem}{necessarycondition}\label{thm:necessarycondition}
Let $G$ act non-trivially by quaternionic automorphisms on a connected integrable quaternionic manifold $M$ of dimension $4n$. Then there is a non-trivial representation $\rho:\gog \to \sl_{n+1}(\H)$.
\end{restatable*}

In \cref{sec:reps} we will describe all $\H$-representations of a real semisimple Lie algebra $\gog$, i.e., all Lie algebra homomorphisms $\rho:\gog\to \End_\H(W)$ for a finite dimensional $\H$-vector space $W$, following the work of Iwahori (\cite{Iwahori1959}) and Tits (\cite{Tits1967}). We will show that $\H$-representations are completely reducible (\cref{thm:completelyreducible}) and completely classify irreducible $\H$-representations (\cref{thm:irreduciblereps}). We can conclude that an $\H$-representation is uniquely defined by its underlying complex representation (\cref{thm:RHequivalentiffCequivalent}).

In \cref{sec:apps} we will use this knowledge to find the $\H$-representation of minimal dimension for any real simple Lie algebra $\gog$ (\cref{thm:listsminimalquatreps}). This provides us with a lower bound on the dimension of a connected manifold with $G$-invariant integrable almost quaternionic structure (\cref{cor:necessarycondition2}). For homogeneous spaces, the dimension of $G$ is an upper bound. We compare those bounds to prove our results. 

\section*{Acknowledgements}

This article is part of my PhD thesis at the Technion under the guidance of Michah Sageev and Tobias Hartnick. I want to thank Ilka Agricola and Bernhard M\"uhlherr for insights in the differential geometry and representation theory aspects, respectively. I thank the Mathematical Institute at Giessen University and the Department of Mathematics at Karlsruhe Institute of Technology (KIT) for excellent working conditions and financial support during my visits. I also want to acknowledge the generous financial support through the Department of Mathematics of the Technion.

\section{Quaternionic manifolds}\label{sec:preliminaries}

In this section we will explain the different definitions of quaternionic structures on manifolds and how they relate to each other. After giving the definitions in general, we will focus on the example most relevant to us, namely Riemannian symmetric spaces.

\subsection*{Types of quaternionic structures}\label{subsec:quaternionstructures}

There is an assortment of different structures on differential manifolds that are called quaternionic in the literature. We will mostly follow the nomenclature used in Chapter 14 of Besse (\cite{Besse2008}). Many of the following definitions are instances of more general definitions concerning $\cG$-structures on differentiable manifolds.

For every natural number $n\in \mathbb{N}$ we fix a standard $\R$-basis of the $n$-dimensional right quaternionic vector space $\H^n$, i.e., we fix an isomorphism $\H^n\cong\R^{4n}$. This also fixes an embedding $\GL(n,\H)\cdot\H^*\subset \GL(4n,\R)$.

\begin{definition}\label{def:almostquaternion}
An \emph{almost quaternionic structure} $\cB(M)$ on a smooth manifold $M^{4n}$ is a $\GL(n,\H)\cdot\H^*$-structure on $M$, i.e., $\cB(M)$ is a principal $\GL(n,\H)\cdot\H^*$-subbundle of the frame bundle $FM$ of $M$. An \emph{almost quaternionic manifold} is a smooth manifold $M$ with an almost quaternionic structure. A \emph{quaternionic morphism} between almost quaternionic manifolds $f:M^{4n}\to N^{4n}$ is a $\GL(n,\H)\cdot\H^*$-structure morphism, i.e., $f$ is a smooth map such that the induced map $f_*$ on the frame bundle satisfies $f_*(\cB(M))\subset\cB(N)$. We denote the group of quaternionic automorphisms of an almost quaternionic manifold $M$ by $\Aut_\H(M)$.
\end{definition}

This is the most basic definition and all other structures we mention are built upon it. For us a quaternionic morphism will always be a morphism of the underlying almost quaternionic structure.

Since $\GL(1,\H)\cdot\H^*=\R^+\times \operatorname{O}(4)$, an almost quaternionic structure on $M^4$ is a conformal structure. To avoid the different taste of this case, we will mostly restrict ourselves to manifolds of dimension $4n$ for $n\geq 2$.

\begin{remark}\label{rem:almostquaternion2}
One can show that a smooth manifold $M$ admits an almost quaternionic structure $\cB(M)$ if and only if it admits an algebra subbundle $\H(M)$ of the bundle of endomorphisms of the tangent bundle $TM$ with standard fiber $\H$. The correspondence is given as follows.

Let $\cB(M)$ be an almost quaternionic structure on $M$. A local section $\eta$ of $\cB(M)$ defined on a neighborhood $U\subset M$ is a smooth frame, i.e., a local trivialization of the tangent bundle over $U$. Using the fixed isomorphism between $\R^{4n}$ and $\H^n$, we get an $\R$-linear isomorphism ${\eta_x:T_xM\to \H^n}$ for any $x\in U$. We give a local trivialization of $\H(M)$ over $U$ by \begin{equation*}
\phi^{-1}(x,q)(v)=\eta_x^{-1}(\eta_x(v)\cdot q)
\end{equation*} for any $x\in U$, $q\in \H$ and $v\in T_xM$.

Conversely, let $\H(M)$ be a bundle as required above. There exists a local trivialization $\phi:\H(U)\to U\times \H$ on a neighborhood $U\subset M$ that respects the algebra structure. This makes $T_xM$ a quaternionic vector space for any $x\in U$. By shrinking $U$ if necessary we can choose vector fields $X_1,\dots,X_n$ such that $\{X_i(x)\}_{i=1}^n$ is an $\H$-basis of $T_xM$ for all $x\in U$. Again using the standard isomorphism between $\H^n$ and $\R^{4n}$, this choice of a basis gives us a local section $\eta$ of the frame bundle $FM$. We define $\cB(U)$ as the $\GL(n,\H)\cdot\H^*$-orbit of $\eta$.

A map $f:M \to N$ is a quaternionic morphism if and only if it is a local diffeomorphism $f:M \to N$ such that for every $x \in M$ and $v\in T_xM$ we have $df_x(v\cdot\H(M)_x)=df_x(v)\cdot\H(N)_{f(x)}.$
\end{remark}

\begin{example}\label{ex:quaternionvectorspace}
The quaternionic vector space $\H^n$ is an almost quaternionic manifold. Using the flat connection we identify the tangent bundle $T\H^n$ of $\H^n$ with $\H^n\times\H^n$. We set $\cB(\H^n)$ as the $\GL(n,\H)\cdot\H^*$-orbit of our standard $\R$-basis of $\H^n$. We get the same almost quaternionic structure on $\H^n$ by setting $\H(\H^n)=\{R_q\mid q\in \H\}$, where $R_q: T\H^n\to T\H^n,\ (p,v)\mapsto (p,vq)$ is the endomorphism of the tangent bundle given by right multiplication with $q$. 
\end{example}

The following definition does not play a role in our argument, but is prevalent in the literature. We mention it to avoid confusion.

\begin{definition}\label{def:quaternion}
A \emph{quaternionic manifold} is an almost quaternionic manifold admitting a torsion-free $\GL(n,\H)\cdot\H^*$-connection.
\end{definition}

The notion we want to investigate is the following:

\begin{definition}\label{def:integrablequaternion}
An almost quaternionic structure $\cB(M)$ on a smooth manifold $M^{4n}$ is \emph{integrable}, if for any $p\in M$ there is a coordinate neighborhood $p\in U$ and a coordinate chart $\phi=(x_1,\dots,x_{4n}):U\to \R^{4n}$ such that the coordinate vector fields $(\frac{\partial}{\partial x_1},\dots,\frac{\partial}{\partial x_{4n}})$ give a section of $\cB(M)$. We call an almost quaternionic manifold \emph{integrable}, if its almost quaternionic structure is integrable. An \emph{integrable quaternionic manifold} is a smooth manifold $M$ with an integrable almost quaternionic structure.
\end{definition}

Note for contrast that an almost complex manifold admitting a torsion-free $\GL(n,\C)$-connection is already complex. We will later see examples of quaternionic manifolds that are not integrable.

\begin{remark}
The almost quaternionic structure on $\H^n$ is integrable. Actually, more is true: An almost quaternionic structure on $M^{4n}$ is integrable if and only if it is locally isomorphic to the almost quaternionic structure on $\H^n$. Explicitly, $M^{4n}$ is an integrable quaternionic manifold if for every $x\in M$ there exists an open neighborhood $x\in U$ and a quaternionic isomorphism $\phi:U\to V\subset \H^n$ onto its image (see p. 315 in \cite{Sternberg1983}).
\end{remark}

In the context of Riemannian manifolds, we get yet another notion of quaternionic structure.

\begin{definition}\label{def:qKspace}
A $4n$-dimensional Riemannian manifold $M$ with $n\geq 2$ is \emph{quaternion-K\"ahler} if its holonomy group is contained in ${\Sp(n)\cdot\Sp(1)}$. A $4$-dimensional Riemannian manifold $M$ is \emph{quaternion-K\"ahler} if it is Einstein and self-dual.
\end{definition}

Often one demands in the definition of quaternion-K\"ahler manifolds that the holonomy group contains the $\Sp(1)$-factor of $\Sp(n)\cdot\Sp(1)$ to exclude the hyperk\"ahler case. We will not demand this. 

Since the Levi-Civita connection is torsion-free, a quaternion-K\"ahler manifold is a quaternionic manifold (see Proposition 14.63 in \cite{Besse2008}). So we have the diagram of implications:

\[\begin{tikzcd}
\text{almost quaternion} \arrow[r, Leftarrow] & \text{quaternion} \arrow[r, Leftarrow] \arrow[d, Leftarrow] & \text{integrable}\\
{} & \text{quaternion-K\"ahler} & {}
\end{tikzcd}\]

\subsection*{Quaternion-K\"ahler symmetric space}\label{subsec:symmetric spaces}

Our main interest are $G$-homogeneous spaces with $G$-invariant almost quaternionic structures, in particular Riemannian symmetric spaces. They will also provide examples for the definitions given above.

\begin{lemma}\label{lem:homogeneousspace}
Let $M^{4n}=G/H$ be a homogeneous space. Then $M$ has a $G$-invariant almost quaternionic structure if and only if the linear isotropy group $\Is_{eH}(H)$ is a subgroup of $\GL(n,\H)\cdot\H^*\subset \GL(T_{eH}M)$.
\end{lemma}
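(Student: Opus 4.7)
The proof is a standard construction in the theory of $\cG$-structures on homogeneous spaces, specialized to $\cG = \GL(n,\H)\cdot\H^*$. The core idea is that a $G$-invariant $\cG$-structure on $G/H$ is determined by its fiber over $eH$, and the compatibility condition with the $H$-action translates into a condition on the linear isotropy representation.

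For the forward implication, suppose $\cB(M)$ is a $G$-invariant almost quaternionic structure. The fiber $\cB(M)_{eH}$ is a single $\GL(n,\H)\cdot\H^*$-orbit inside the frame fiber $(FM)_{eH}$. Since the $G$-action on $FM$ preserves $\cB(M)$ and $H$ fixes $eH$, the linear isotropy representation $\Is_{eH}$ restricted to $H$ preserves $\cB(M)_{eH}$. Fixing any frame $\eta_0 \in \cB(M)_{eH}$ to identify $T_{eH}M$ with $\R^{4n}$, the condition that $\Is_{eH}(h)\cdot \eta_0 \in \cB(M)_{eH} = \eta_0 \cdot (\GL(n,\H)\cdot\H^*)$ becomes precisely $\Is_{eH}(h) \in \GL(n,\H)\cdot\H^*$ for every $h \in H$.

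For the converse, assume $\Is_{eH}(H) \subset \GL(n,\H)\cdot\H^*$ after identifying $T_{eH}M \cong \R^{4n}$ via some frame $\eta_0$. Define $\cB(M)_{eH}$ as the $\GL(n,\H)\cdot\H^*$-orbit of $\eta_0$ inside $(FM)_{eH}$; this set is $H$-invariant by hypothesis. For arbitrary $x = gH \in M$ set
\[
\cB(M)_x \;=\; g_*\,\cB(M)_{eH} \;\subset\; (FM)_x.
\]
The well-definedness follows from the $H$-invariance of $\cB(M)_{eH}$: replacing $g$ by $gh$ yields $(gh)_*\cB(M)_{eH} = g_*(h_*\cB(M)_{eH}) = g_*\cB(M)_{eH}$. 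By construction the total space $\cB(M) = \bigsqcup_x \cB(M)_x$ is $G$-invariant and its fibers are $\GL(n,\H)\cdot\H^*$-orbits.

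The one nontrivial point, and the step I expect to demand some care, is checking that $\cB(M)$ is smooth and is a principal $\GL(n,\H)\cdot\H^*$-subbundle of $FM$. For this I would pick a local smooth section $\sigma: U \to G$ of the quotient map $G \to G/H$ on an open neighborhood $U$ of $eH$, and observe that
\[
U \times \bigl(\GL(n,\H)\cdot\H^*\bigr) \;\longrightarrow\; \cB(M)|_U,\qquad (x,a)\longmapsto \sigma(x)_*(\eta_0 \cdot a),
\]
is a smooth diffeomorphism onto $\cB(M)|_U$, exhibiting the local triviality. Translating this chart around by elements of $G$ covers all of $M$, and the transition functions lie in $\GL(n,\H)\cdot\H^*$ because any two trivializations differ by a map into the isotropy orbit, which by the hypothesis factors through $\GL(n,\H)\cdot\H^*$. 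This yields the desired principal subbundle structure, and $G$-invariance is automatic from the definition.
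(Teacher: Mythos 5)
Your proof is correct. The forward direction is the same as the paper's: $G$-invariance plus the fact that each fiber of a principal $\GL(n,\H)\cdot\H^*$-subbundle is a single orbit forces $\Is_{eH}(H)\subset\GL(n,\H)\cdot\H^*$. For the converse the two arguments diverge in route, though not in underlying idea. The paper passes through the equivalent description of an almost quaternionic structure as an algebra subbundle $\H(M)\subset\End(TM)$ (its Remark on $\H(M)$): the hypothesis makes the fiber $\H(M)_{eH}$ of right-multiplication operators $H$-invariant because $\GL(n,\H)\cdot\H^*$ normalizes $\H^*$, and one then transports this fiber by the $G$-action and invokes the remark to recover the $\cG$-structure. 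You instead transport the $\GL(n,\H)\cdot\H^*$-orbit of a single frame $\eta_0$ and build the principal subbundle of $FM$ directly, verifying well-definedness via $H$-invariance of the orbit and local triviality via a local section of $G\to G/H$. Your version is more self-contained in that it does not rely on the frame-bundle/endomorphism-bundle dictionary, and it makes explicit the smoothness and local-triviality check that the paper leaves implicit inside that remark; the paper's version is shorter because the normalization of $\H^*$ by $\GL(n,\H)\cdot\H^*$ does the invariance bookkeeping in one line. Both establish $G$-invariance by construction.
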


\begin{proof}
Let $M^{4n}=G/H$ have a $G$-invariant almost quaternionic structure $cB(M)$. Since $G$ acts by quaternionic isomorphisms, we get $h_*(\cB(M)_{eH})=cB(M)_{eH}$ for $h\in H$. We get that $\Is_{eH}(H)\subset \GL(n,\H)\cdot\H^*$, since $\cB(M)$ is a principal $\GL(n,\H)\cdot\H^*$-bundle by definition. 

For the other direction, let $M^{4n}=G/H$ be a homogeneous space such that the linear isotropy group $\Is_{eH}(H)$ is a subgroup of $\GL(n,\H)\cdot\H^*$. Then $\H(X)_{eH}=\H^*\cup\{0\}$ is $H$-invariant, since $\GL(n,\H)\cdot\H^*$ normalizes $H^*$. So $\H(X)_{gH}=dg_{eH}\circ \H(X)_{eH}\circ dg^{-1}_{gH}$ is well-defined. By \cref{rem:almostquaternion2} this gives $M$ an almost quaternionic structure. The quaternionic structure is $G$-invariant by definition.
\end{proof}

Now we can show that Riemannian symmetric spaces with invariant almost quaternionic structure are characterized by their holonomy.

\almostqsymmetricimpliesqK

\begin{proof}
Since $G/K$ is a Riemannian symmetric space, the isotropy representation of $K$ is faithful, i.e., $K=\Is_{eK}(K)$. The Riemannian holonomy of $G/K$ is $K$. If $G/K$ has a $G$-invariant almost quaternionic structure, then ${K=\Is_{eK}(K)\subset \GL(n,\H)\cdot\H^*}$ by \cref{lem:homogeneousspace}. The unique maximal compact subgroup of $\GL(n,\H)\cdot\H^*$ is $\Sp(n)\cdot\Sp(1)$. Since $K$ is compact, we get $K\subset \Sp(n)\cdot\Sp(1)$. So $G/K$ is quaternion-K\"ahler. If $G/K$ is quaternion-K\"ahler, we have ${\Is_{eK}(K)=K\subset \Sp(n)\cdot\Sp(1)}$. By \cref{lem:homogeneousspace} $G/K$ has a $G$-invariant almost quaternionic structure.
\end{proof}

\begin{definition}\label{def:qKsymmetricspace}
A \emph{quaternion-K\"ahler symmetric space} is a Riemannian symmetric space that is quaternion-K\"ahler.
\end{definition}

Wolf classified the quaternion-K\"ahler symmetric spaces in \cite{Wolf1965}. Every quaternion-K\"ahler symmetric space is either flat, of compact type or of non-compact type. In \cref{table:noncompqKsymspace} we list all quaternion-K\"ahler symmetric spaces of non-compact type. (Products of these spaces are not again quaternion-K\"ahler.) The compact quaternion-K\"ahler symmetric spaces are dual to those in \cref{table:noncompqKsymspace}. (Note that $\sp(1)=\su(2)$ and $\so(4)=\su(2)\oplus\su(2)$.)

\begin{example}\label{ex:quaternionvectorspacesymmetric}
A flat quaternion-K\"ahler symmetric space is isometric to $\H^n$. In particular, it is an integrable quaternionic manifold as we saw in \cref{ex:quaternionvectorspace}.
\end{example}

\begin{example}\label{ex:quaternionprojectivespace}
The quaternionic projective space $\mathbb{P}(\H^{n+1})$ is the quotient space for the action of $\H^*$ on $\H^{n+1}$ by right multiplication. Let ${U_i=\{[q_1,\dots,q_{n+1}]\mid q_i\neq 0\}}$ and \begin{equation*}
\phi_i:U_i\to \H^n\cong \R^{4n},\quad [q_1,\dots,q_{n+1}]\mapsto (q_1,\dots,q_{i-1},\hat{q_i},q_{i+1},\dots,q_{n+1})q_i^{-1}.
\end{equation*}Then the atlas $\{U_i,\phi_i\}$ makes $\mathbb{P}(\H^{n+1})$ an integrable quaternionic manifold (see Lemma 3.2. in \cite{Kulkarni1978}). $\PGL_{n+1}(\H)$ acts on $\mathbb{P}(\H^{n+1})$ by quaternionic morphisms.
\end{example}

Actually more is true: By Theorem 5.1. in \cite{Kulkarni1978} we have $\PGL_{n+1}(\H)=\Aut_{\H}(\mathbb{P}(\H^{n+1}))$. This fact will play a major role in \cref{sec:Kulkarni}. For the next example, see also Preliminaries in \cite{LiuZhang2012}.

\begin{example}\label{ex:quaternionhyperbolicspace}
The $n$-dimensional quaternionic ball $\mathbb{B}_\H^n=\{q \in \H^n\mid |q| < 1\}$ can be embedded into $\H\mathbb{P}^n$ by \begin{equation*}
\mathbb{B}_\H^n\to\H\mathbb{P}^n, q=(q_1,\dots,q_n)^T\mapsto [q_1,\dots,q_n,1].
\end{equation*} This map is a quaternionic isomorphism, so we can identify $\mathbb{B}_\H^n$ with its image in $\H\mathbb{P}^n$. $\Sp(n,1)$ acts transitively on $\mathbb{B}_\H^n$ and $\Sp(n)\times\Sp(1)$ is the stabilizer of the point $[0,\dots,0,1]$. So the quaternionic hyperbolic space $H^n_\H=\Sp(n,1)/\Sp(n)\times\Sp(1)$ can be realized as $\mathbb{B}_\H^n$. In particular, $H^n_\H$ is an integrable quaternionic manifold.
\end{example}

\begin{table}[!htbp]\begin{adjustbox}{center}
\begin{tabular}{ |c|c|c|c|c|c| } \hline
$\gog_\C$ & $\gog$ & $\gok$ & Satake-Tits diagram & Type of $\Sigma$ & $\dim(G/K)$\\
\hline
$A_2$ & $\su(1,2)$ & $\su(1)\oplus\su(2)$ & \adjustbox{valign=m}{\begin{dynkinDiagram}[labels*={1,2}, edge length=.75cm]{A}{oo}\invol{1}{2}\end{dynkinDiagram}} & $BC_1$ & $4$\\
$A_3$ & $\su(2,2)$ & $\su(2)\oplus\su(2)$ & \adjustbox{valign=m}{\begin{dynkinDiagram}[labels*={1,2,3},edge length=.75cm]{A}{ooo}\invol{1}{3}\end{dynkinDiagram}} & $B_2$ & $8$\\
$A_4$ & $\su(3,2)$ & $\su(3)\oplus\su(2)$ & \adjustbox{valign=m}{\begin{dynkinDiagram}[labels*={1,2,3,4},edge length=.75cm]{A}{oooo}\invol{1}{4}\invol{2}{3}\end{dynkinDiagram}} & $BC_2$ & $12$\\
$A_n$, $n>4$ & $\su(n-1,2)$ & $\su(n-1)\oplus\su(2)$ & \adjustbox{valign=m}{\begin{dynkinDiagram}[labels*={1,2,3,n-2,n-1,n},edge length=.75cm]{A}{oo*.*oo}\invol{1}{6}\invol{2}{5}\end{dynkinDiagram}} & $BC_2$ & $4(n-1)$\\
$B_2$ & $\so(1,4)$ & $\so(1)\oplus\so(4)$ & \adjustbox{valign=m}{\begin{dynkinDiagram}[labels*={1,2},edge length=.75cm]{B}{o*}\end{dynkinDiagram}} & $A_1$ & $4$\\
$B_3$ & $\so(3,4)$ & $\so(3)\oplus\so(4)$ & \adjustbox{valign=m}{\begin{dynkinDiagram}[labels*={1,2,3},edge length=.75cm]{B}{ooo}\end{dynkinDiagram}} & $B_3$ & $12$\\
$B_4$ & $\so(5,4)$ & $\so(5)\oplus\so(4)$ & \adjustbox{valign=m}{\begin{dynkinDiagram}[labels*={1,2,3,4},edge length=.75cm]{B}{oooo}\end{dynkinDiagram}} & $B_4$ & $20$\\
$B_n$, $n>4$ & $\so(2n-3,4)$ & $\so(2n-3)\oplus\so(4)$ & \adjustbox{valign=m}{\begin{dynkinDiagram}[labels*={1,2,3,4,5,n-1,n},edge length=.75cm]{B}{oooo*.**}\end{dynkinDiagram}} & $B_4$ & $4(2n-3)$\\
$C_3$ & $\sp(2,1)$ & $\sp(2)\oplus\sp(1)$ & \adjustbox{valign=m}{\begin{dynkinDiagram}[labels*={1,2,3},edge length=.75cm]{C}{*o*}\end{dynkinDiagram}} & $BC_1$ & $8$\\
$C_n$, $n>3$ & $\sp(n-1,1)$ & $\sp(n-1)\oplus\sp(1)$ & \adjustbox{valign=m}{\begin{dynkinDiagram}[labels*={1,2,3,n-1,n},edge length=.75cm]{C}{*o*.**}\end{dynkinDiagram}} & $BC_1$ & $4(n-1)$\\
$D_4$ & $\so(4,4)$ & $\so(4)\oplus\so(4)$ & \adjustbox{valign=m}{\begin{dynkinDiagram}[labels={1,2,3,4},edge length=.75cm]{D}{oooo}\end{dynkinDiagram}} & $D_4$ & $16$\\
$D_5$ & $\so(6,4)$ & $\so(6)\oplus\so(4)$ & \adjustbox{valign=m}{\begin{dynkinDiagram}[labels={1,2,3,4,5},edge length=.75cm]{D}{ooooo}\invold{4}{5}\end{dynkinDiagram}} & $B_4$ & $24$\\
$D_6$ & $\so(8,4)$ & $\so(8)\oplus\so(4)$ & \adjustbox{valign=m}{\begin{dynkinDiagram}[labels={1,2,3,4,5,6},edge length=.75cm]{D}{oooo**}\end{dynkinDiagram}} & $B_4$ & $32$\\
$D_n$, $n>6$ & $\so(2n-4,4)$ & $\so(2n-4)\oplus\so(4)$ & \adjustbox{valign=m}{\begin{dynkinDiagram}[labels={1,2,3,4,5,n-2,n-1,n}, edge length=.75cm]{D}{oooo*.***}\end{dynkinDiagram}} & $B_4$ & $4(2n-4)$\\
$E_6$ & $\goe_{6(2)}$ & $\su(6)\oplus\su(2)$ & \adjustbox{valign=m}{\begin{dynkinDiagram}[labels*={1,6,2,3,4,5}, edge length=.75cm]{E}{oooooo}\invol{1}{6}\invol{3}{5}\end{dynkinDiagram}} & $F_4$ & $40$\\
$E_7$ & $\goe_{7(-5)}$ & $\so(12)\oplus\su(2)$ & \adjustbox{valign=m}{\begin{dynkinDiagram}[labels*={6,7,5,4,3,2,1}, backwards, edge length=.75cm]{E}{o*oo*o*}\end{dynkinDiagram}} & $F_4$ & $64$\\
$E_8$ & $\goe_{8(-24)}$ & $\goe_7\oplus\su(2)$ & \adjustbox{valign=m}{\begin{dynkinDiagram}[labels*={7,8,6,5,4,3,2,1}, backwards, edge length=.75cm]{E}{o****ooo}\end{dynkinDiagram}} & $F_4$ & $112$\\
$F_4$ & $\gof_{4(4)}$ & $\sp(3)\oplus\su(2)$ & \adjustbox{valign=m}{\begin{dynkinDiagram}[labels*={1,2,3,4},mark=o, edge length=.75cm, reverse arrows]{F}{4}\end{dynkinDiagram}} & $F_4$ & $28$\\
$G_2$ & $\gog_{2(2)}$ & $\su(2)\oplus\su(2)$ & \adjustbox{valign=m}{\begin{dynkinDiagram}[labels*={1,2},mark=o, edge length=.75cm, reverse arrows]{G}{2}\end{dynkinDiagram}} & $G_2$ & $8$\\
\hline
\end{tabular}\end{adjustbox}
\caption{Quaternion-K\"ahler symmetric spaces of non-compact type}
\label{table:noncompqKsymspace}\end{table}

We saw above that quaternionic vector space, projective spaces and hyperbolic space are integrable quaternion-K\"ahler symmetric spaces. We will show below that no other quaternion-K\"ahler symmetric space is integrable.

\section{From integrable quaternionic manifolds to quaternionic representations}\label{sec:Kulkarni}

In this section we want to pass from integrable quaternionic manifolds to manifolds with a $(\PGL_{n+1}(\H),\mathbb{P}(\H^{n+1}))$-structure in the sense of Thurston. \cref{thm:kulkarniquaternion} provides us with a bridge between these notions. This different approach allows us to pass from actions of a Lie group $G$ on integrable quaternionic manifolds by quaternionic automorphisms to particular representations of $\gog$.

\subsection*{\texorpdfstring{$(G,X)$-structures}{(G,X)-structures}}\label{subsec:GXstructures}

We first need to review some facts about general $(G,X)$-structures. For this we will follow \cite{Thurston1997}.

\begin{definition}\label{def:(G,X)-structures}
Let $G$ act on $X$ analytically, by diffeomorphisms. A \emph{differentiable $(G,X)$-atlas} on a differentiable manifold $M$ is an atlas $\{(U_i, \phi_i)\}_{i\in I}$ such that\begin{enumerate}
\item the $U_i$ form an open cover of $M$,
\item the \emph{charts} $\phi_i:U_i\to X$ are smooth embeddings such that \begin{equation*}
\phi_i\circ\phi_j^{-1}:\phi_j (U_i\cap U_j)\to \phi_i (U_i\cap U_j)
\end{equation*}is a restriction of an element $g_{ij}\in G$.
\end{enumerate} Two $(G,X)$-atlases on $M$ are \emph{equivalent} if their union is a $(G,X)$-atlas. A \emph{$(G,X)$-structure} on $M$ is an equivalence class of differentiable $(G,X)$-atlases. A \emph{$(G,X)$-manifold} is a differentiable manifold with a $(G,X)$-structure. A \emph{$(G,X)$-morphism} is a local diffeomorphism ${f:M\to N}$ between $(G,X)$-manifolds $M$ and $N$ such that for any pair of charts $(U, \phi)$ of $M$ and $(V, \psi)$ of $N$ we have that \begin{equation*}\psi\circ f\circ \phi^{-1}:\phi(U\cap f^{-1}(V))\to \psi(f(U)\cap V)\end{equation*}is a restriction of an element $g\in G$. We denote the group of $(G,X)$-automorphisms of a $(G,X)$-manifold $M$ by $\Aut_{(G,X)}(M)$.
\end{definition}

\begin{definition}\label{def:analyticcontinuation}
Let $M$ be a $(G,X)$-manifold, $\alpha:[0,1]\to M$ a path in $M$ and $(U_0, \phi_0)$ a chart such that $\alpha(0)\in U_0$. We subdivide $[0,1]$ into intervals $[x_i,x_{i+1}]$ with $x_0=0$, $x_{m_\alpha}=1$ such that $\alpha([x_i,x_{i+1}])$ is contained in a chart $(U_i,\phi_i)$ for $0\leq i<m_\alpha$. We denote by $\phi([\alpha],\phi_0)$ the \emph{analytic continuation} of $\phi_0$ along $\alpha$, that is \begin{equation*}
\phi([\alpha],\phi_0)=g_{01}\dots g_{m_\alpha-1,m_\alpha}\phi_{m_\alpha}.
\end{equation*}
\end{definition}

The notation $\phi([\alpha],\phi_0)$ is justified by the fact that the analytic continuation $\phi([\alpha],\phi_0)$ only depends on the homotopy class $[\alpha]$ (with fixed endpoints) and the chart $\phi_0$.

\begin{definition}\label{def:developingmap}
Let $M$ be a $(G,X)$-manifold. We choose a basepoint $x_0\in M$ and a chart $(U_0, \phi_0)$ such that $x_0\in U_0$. Let $\pi:\tilde{M}\to M$ be the universal cover of $M$. We identify $\tilde{M}$ with the space of homotopy classes $[\alpha]$ of curves $\alpha:[0,1]\to M$ starting at $x_0=\alpha(0)$. The \emph{developing map} is the map \begin{equation*}
D:\tilde{M}\to X,\quad [\alpha]\mapsto \phi([\alpha],\phi_0)(\alpha(1)).
\end{equation*}
\end{definition}

We can induce a $(G,X)$-structure on the universal cover $\tilde{M}$ of a $(G,X)$-manifold $M$ such that the covering map is a $(G,X)$-morphism. With respect to this structure the developing map $D$ is a $(G,X)$-morphism (see p.140 of \cite{Thurston1997}). For any $x\in \tilde{M}$ there is a neighborhood $U_x$ on which $D$ is a diffeomorphism on its image. So for any $f\in \Aut_{(G,X)}(\tilde{M})$ we have that \begin{equation*}
D\circ f \circ (D_{\restriction U_x})^{-1}:D(U_x)\to D(f(U_x))
\end{equation*} is the restriction of a unique element of $G$, which we denote by $\hol(f,x)$. The map $\hol(f,-):\tilde{M}\to G, x\mapsto \hol(f,x)$ is locally constant by definition. Since $\tilde{M}$ is simply connected, $\hol(f,-)$ is constant and we denote its image by $\hol(f)$. (This is an extension of the holonomy defined on p.151 of \cite{Thurston1997}.) The developing map $D$ is $\hol$-equivariant, i.e., \begin{equation*}
D\circ f=\hol(f)D.
\end{equation*} So $\hol:\Aut_{(G,X)}(\tilde{M})\to G$ is a homomorphism.

\subsection*{Kulkarni's theorem and consequences}\label{subsec:Kulkarni}

We are now prepared to cross the bridge from integrable quaternionic manifolds to quaternionic representations. The following definition can be made for any $\cG$-structure, but we stick to our almost quaternionic structures.

\begin{definition}\label{def:GGXcompatible}
Let $M$ be a $(G,X)$-manifold and let $X$ be an almost quaternionic manifold. An almost quaternionic structure on $M$ is \emph{compatible with the $(G,X)$-structure} if the charts are quaternionic morphisms.
\end{definition}

We can finally reformulate Kulkarni's theorem:

\kulkarni

The proof of the theorem relies on two independently important steps:
\begin{enumthm}
\item\label{thm:projspaceintegrable} $\mathbb{P}(\H^{n+1})$ has an integrable almost quaternionic structure (as seen in \cref{ex:quaternionprojectivespace}).
\item\label{thm:extendquaterisomorphism} Every locally defined quaternionic isomorphism of $\mathbb{P}(\H^{n+1})$ is a restriction of an element of $\PGL_{n+1}(\H)$.
\end{enumthm}

Actually Kulkarni proved a more general result: For $\cG\subset \GL_n(\R)$ of finite type he proved that there is a manifold $S(\cG)$ and a Lie group $K(\cG)$ acting on $S(\cG)$ such that a $\cG$-structure on a smooth manifold $M^{n}$ is integrable if and only if it is compatible with a $(K(\cG),S(\cG))$-structure on $M^{n}$ (Theorem 2.3 in \cite{Kulkarni1978}). He then proved that $S(\GL(n,\H)\cdot\H^*)=\mathbb{P}(\H^{n+1})$ and $K(\GL(n,\H)\cdot\H^*)=\PGL_{n+1}(\H)$ (Theorem 3.4 in \cite{Kulkarni1978}).

A clarifying and useful observation for us was the following:

\begin{lemma}\label{lem:equivmorphism}
Let $M^{4n}$ be an integrable quaternionic manifold. Then \begin{equation*}
\Aut_\H(M)=\Aut_{(\PGL_{n+1}(\H),\mathbb{P}(\H^{n+1}))}(M).
\end{equation*}
\end{lemma}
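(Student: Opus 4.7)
The plan is to prove both inclusions by unfolding the definitions and then invoking the two ingredients (\ref{thm:projspaceintegrable}) and (\ref{thm:extendquaterisomorphism}) already isolated from Kulkarni's theorem. By \cref{thm:kulkarniquaternion}, the integrable almost quaternionic structure on $M$ is compatible with a $(\PGL_{n+1}(\H),\mathbb{P}(\H^{n+1}))$-structure, so every chart $\phi:U\to \mathbb{P}(\H^{n+1})$ of the $(G,X)$-structure is a local quaternionic isomorphism onto its image.

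First I would show $\Aut_{(\PGL_{n+1}(\H),\mathbb{P}(\H^{n+1}))}(M)\subset \Aut_\H(M)$. Let $f$ be a $(G,X)$-automorphism of $M$, and take charts $(U,\phi)$ at $x\in M$ and $(V,\psi)$ at $f(x)$. By definition of a $(G,X)$-morphism, $\psi\circ f\circ \phi^{-1}$ is (the restriction of) an element $g\in \PGL_{n+1}(\H)$, and by step~(\ref{thm:projspaceintegrable}) the action of $\PGL_{n+1}(\H)$ on $\mathbb{P}(\H^{n+1})$ is by quaternionic automorphisms (see \cref{ex:quaternionprojectivespace}). Writing locally $f = \psi^{-1}\circ g \circ \phi$ and using that $\phi$ and $\psi$ are quaternionic by compatibility, $f$ is a composition of quaternionic morphisms, hence itself quaternionic.

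Next I would prove the reverse inclusion $\Aut_\H(M)\subset \Aut_{(\PGL_{n+1}(\H),\mathbb{P}(\H^{n+1}))}(M)$. Let $f\in \Aut_\H(M)$ and pick charts $(U,\phi)$, $(V,\psi)$ as above. Then $\psi\circ f\circ \phi^{-1}$ is a locally defined quaternionic isomorphism between open subsets of $\mathbb{P}(\H^{n+1})$, because $\phi$, $f$ and $\psi$ are all quaternionic. By step~(\ref{thm:extendquaterisomorphism}), every such local quaternionic isomorphism is the restriction of a (unique) element of $\PGL_{n+1}(\H)$. This is exactly the chart-transition condition in \cref{def:(G,X)-structures}, so $f$ is a $(G,X)$-morphism.

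The only real content is step~(\ref{thm:extendquaterisomorphism}), which is cited from Kulkarni; everything else is a diagram chase through the definitions. The main thing to be careful about is to apply the rigidity statement on small enough chart domains where $f$ actually maps $U\cap f^{-1}(V)$ into $V$, so that $\psi\circ f\circ \phi^{-1}$ is genuinely defined on an open set of $\mathbb{P}(\H^{n+1})$ before extending it to a global element of $\PGL_{n+1}(\H)$.
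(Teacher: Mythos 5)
Your proof is correct and follows essentially the same route as the paper: both directions are obtained by expressing $f$ in charts, using compatibility of the quaternionic structure with the $(\PGL_{n+1}(\H),\mathbb{P}(\H^{n+1}))$-structure, and invoking the rigidity statement that local quaternionic isomorphisms of $\mathbb{P}(\H^{n+1})$ are restrictions of elements of $\PGL_{n+1}(\H)$ (together with the converse inclusion $\PGL_{n+1}(\H)\subset\Aut_\H(\mathbb{P}(\H^{n+1}))$ from \cref{ex:quaternionprojectivespace}). The only cosmetic point is that the fact you use in the first inclusion is really part of step~(\ref{thm:extendquaterisomorphism}) (the equality $\PGL_{n+1}(\H)=\Aut_\H(\mathbb{P}(\H^{n+1}))$) rather than step~(\ref{thm:projspaceintegrable}), but the cited example supplies it either way.
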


\begin{proof}
By \cref{thm:kulkarniquaternion} we know that $M^{4n}$ has an almost quaternionic structure compatible with its $(\PGL_{n+1}(\H),\mathbb{P}(\H^{n+1}))$-structure. Let ${f\in \Aut_{(\PGL_{n+1}(\H),\mathbb{P}(\H^{n+1}))}(M)}$. For any pair of charts $(U_\alpha, \phi_\alpha)$, $(U_\beta, \phi_\beta)$ in the $(\PGL_{n+1}(\H),\mathbb{P}(\H^{n+1}))$-atlas on $M$, we get that $\phi_\beta\circ f\circ \phi_\alpha^{-1}$ is a restriction of an element in $\PGL_{n+1}(\H)$. By \cref{thm:extendquaterisomorphism} we know $\PGL_{n+1}(\H)=\Aut_\H(\mathbb{P}(\H^{n+1}))$. So $\phi_\beta\circ f\circ \phi_\alpha^{-1}$ is a quaternionic morphism. Since the charts are quaternionic morphisms, we get $f\in \Aut_\H(M)$.

Let $f\in \Aut_\H(M)$. Then for any pair of charts $(U_\alpha, \phi_\alpha)$, $(V_\beta, \psi_\beta)$ of $M$ we have that \begin{equation*}
\psi_\beta\circ f\circ \phi_\alpha^{-1}:\phi_\alpha(U_\alpha\cap f^{-1}(V_\beta))\to \psi_\beta(f(U_\alpha)\cap V_\beta)
\end{equation*} is a locally defined quaternionic isomorphism of $\mathbb{P}(\H^{n+1})$ and therefore by \cref{thm:extendquaterisomorphism} a restriction of an element of $\PGL_{n+1}(\H)$. So ${f\in \Aut_{(\PGL_{n+1}(\H),\mathbb{P}(\H^{n+1}))}(M)}$.
\end{proof}

We get a first application of this approach to integrable quaternionic manifolds (see p. 412 in \cite{Besse2008}):

\begin{corollary}\label{cor:compactcase}
A compact simply-connected manifold $M^{4n}$ has an integrable almost quaternionic structure if and only if $M$ is diffeomorphic to $\mathbb{P}(\H^{n+1})$. In particular, a compact quaternion-K\"ahler symmetric space $X$ of dimension $4n$ is integrable if and only if $X=\mathbb{P}(\H^{n+1})$.
\end{corollary}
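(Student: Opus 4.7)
The plan is to combine \cref{thm:kulkarniquaternion} with a standard developing-map argument, and then appeal to Wolf's classification for the ``in particular'' clause.

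For the forward direction of the main biconditional, I take a compact simply-connected integrable quaternionic manifold $M^{4n}$. By \cref{thm:kulkarniquaternion}, $M$ carries a compatible $(\PGL_{n+1}(\H),\mathbb{P}(\H^{n+1}))$-structure, and since $M$ is simply-connected the developing map is a globally defined local diffeomorphism $D\colon M \to \mathbb{P}(\H^{n+1})$. Because $M$ is compact and $\mathbb{P}(\H^{n+1})$ is Hausdorff, $D$ is automatically proper. Its image $D(M)$ is both open (local diffeomorphism) and closed (continuous image of a compact set in a Hausdorff space), so by connectedness of $\mathbb{P}(\H^{n+1})$ we have $D(M)=\mathbb{P}(\H^{n+1})$. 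A proper surjective local homeomorphism onto a connected, locally path-connected space is a covering map, and since $\mathbb{P}(\H^{n+1})$ is simply-connected this covering must be trivial, so $D$ is a diffeomorphism. The converse direction is immediate from \cref{ex:quaternionprojectivespace}.

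For the ``in particular'' clause, every compact quaternion-K\"ahler symmetric space appears in Wolf's list and is simply-connected (each is a quotient of a compact simply-connected Lie group by a connected subgroup). The first part therefore yields a diffeomorphism $X \cong \mathbb{P}(\H^{n+1})$ whenever an integrable such $X$ is given, and one reads off $X = \mathbb{P}(\H^{n+1})$ by observing that within Wolf's classification $\mathbb{P}(\H^{n+1})$ is topologically distinguished from every other compact Wolf space of the same dimension (for instance through its rational cohomology ring $\Q[x]/(x^{n+1})$ with $|x|=4$, whose Betti numbers and ring structure are not shared by the other candidates).

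The main obstacle is the developing-map step. Its individual ingredients---properness, openness and closedness of the image, and the covering-space criterion for proper local homeomorphisms---are standard topology, but the whole argument hinges on \cref{thm:kulkarniquaternion}, where the quaternionic content of the corollary is concentrated. Once that is granted, the forward direction reduces to the general fact that any local diffeomorphism from a compact simply-connected manifold to a simply-connected manifold of the same dimension is a diffeomorphism, and the symmetric-space statement becomes a bookkeeping check against Wolf's table.
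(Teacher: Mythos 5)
Your argument is correct and follows essentially the same route as the paper: Kulkarni's theorem, the developing map from the simply-connected compact $M$, properness forcing $D$ to be a covering map and hence a diffeomorphism onto the simply-connected target, and then a topological distinction of $\mathbb{P}(\H^{n+1})$ among compact symmetric spaces for the ``in particular'' clause (the paper cites Terzi\'c's appendix where you invoke the rational cohomology ring, but the content is the same).
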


\begin{proof}
Assume $M$ has an integrable almost quaternionic structure. Then $M$ is a $(\PGL_{n+1}(\H),\mathbb{P}(\H^{n+1}))$-manifold by \cref{thm:kulkarniquaternion}. $M$ is simply connected, so the developing map gives us a local diffeomorphism ${D:M^{4n}\to\mathbb{P}(\H^{n+1})}$. Since $M$ is compact, $D$ is proper. Since $D$ is a proper local diffeomorphism, it is a covering map (see Proposition 4.46 in \cite{Lee2013}). Since $\mathbb{P}(\H^{n+1})$ is simply connected, the developing map actually has to be a diffeomorphism $M\to \mathbb{P}(\H^{n+1})$. 

Any compact quaternion-K\"ahler symmetric space is simply connected. So if $X$ is integrable, it is diffeomorphic to $\mathbb{P}(\H^{n+1})$. But no other simply connected compact Riemannian symmetric space is even homeomorphic to $\mathbb{P}(\H^{n+1})$ (see Appendix A of \cite{Terzic2003}).
\end{proof}

So the compact quaternion-K\"ahler symmetric spaces other than $\mathbb{P}(\H^{n+1})$ are examples of quaternionic manifolds that are not integrable.

We now use \cref{thm:kulkarniquaternion} to find a necessary condition for the existence of an action of a Lie group $G$ by quaternionic automorphisms on an integrable quaternionic manifold in terms of representations of $\gog$. We will check this condition in the later sections.

\necessarycondition

\begin{proof}
Assume first that $M$ is simply connected. Since $\cB(M)$ is integrable, it is compatible with a $(\PGL_{n+1}(\H),\mathbb{P}(\H^{n+1}))$-structure on $M$ by \cref{thm:kulkarniquaternion}. Since $M$ is simply connected, we have $\hol:\Aut_{(\PGL_{n+1}(\H),\mathbb{P}(\H^{n+1}))}(M)\to \PGL_{n+1}(\H)$. Since $\cB(M)$ is $G$-invariant, we can restrict to $\hol:G\to \PGL_{n+1}(\H)$ by \cref{lem:equivmorphism}. The representation $\hol:G\to \PGL_{n+1}(\H)$ is not trivial, since $G$ acts non-trivially on $M$ and the preimage of a point $D^{-1}(x)$ is discrete for $x\in \mathbb{P}(\H^{n+1})$. We have $\PSL_{n+1}(\H)=\PGL_{n+1}(\H)$ and $\SL_{n+1}(\H)$ covers $\PSL_{n+1}(\H)$. Taking the derivative we get $\rho:\gog \to \sl_{n+1}(\H)$.

If $M$ is not simply connected, there is a covering group $G'$ of $G$ that acts on $\tilde{M}$ covering the action of $G$ on $M$ (see Chapter I, Section 9 in \cite{Bredon1972} or Chapter 1 in \cite{Onishchik1994} for connected $G$). So by \cref{thm:kulkarniquaternion} $\tilde{M}$ has a $G'$-invariant integrable almost quaternionic structure. Since $G$ and $G'$ are locally isomorphic, we can use the first case.
\end{proof}

One can prove this theorem for general $\cG$-structures and get a non-trivial representation $\rho:\gog \to \mathfrak{k}(\cG)$. But this is only useful if one can compute $K(\cG)$ and there is a theory of representations into $\mathfrak{k}(\cG)$. We do not know any $\cG\neq \GL(n,\H)\cdot\H^*$ for which this is the case.

Studying $\hol$ and the $\hol$-equivariant developing map $D$ closer, one can get collect more information about the representation in \cref{thm:necessarycondition}. This might be an interesting approach if one aims for a classification of $G$-homogeneous spaces with $G$-invariant almost quaternionic structure (at least up to dimension). But we will not attempt this here. 

\section{Quaternionic representations}\label{sec:reps}

\cref{thm:necessarycondition} motivates us to study non-trivial representations ${\rho:\gog \to \sl_{n+1}(\H)}$. We focus first on the quaternionic aspect of these representations. Most of the material in this section appears in some form in \cite{Iwahori1959} and \cite{Tits1967}, see also Chapter 8 of \cite{Onishchik2004}.

\begin{definition}\label{def:rchrep}
Let $\gog$ be a real Lie algebra. Let $\bK\in\{\R,\C,\H\}$. An ($\R$-linear) Lie algebra homomorphism $\rho:\gog\to \End_\bK(W)$ for a finite dimensional $\bK$-vector space $W$ is a \emph{$\bK$-representation}. An \emph{isomorphism of $\bK$-representations} is an equivariant $\bK$-linear isomorphism. A $\bK$-representation is \emph{irreducible} if it has no proper $\bK$-subrepresentations.
\end{definition}

Our goal is to find all $\H$-representations of a real semisimple Lie algebra $\gog$. We will proceed in the following steps:\begin{enumerate}
\item Characterize $\H$-representations by $\C$-representations with antiinvolutions.
\item Use this to decompose $\H$-representations as direct sums of two types of $\H$-representations. Check those two types. Conclude complete reducibility of $\H$-representations. Characterize irreducible $\H$-representations in terms of \emph{quaternionic} irreducible $\C$-representations.
\item Reduce the question to a question about fundamental representations of simple complex Lie algebras and finally answer it.
\end{enumerate} We will do the classification of irreducible $\R$-representations of $\gog$ at the same time, since the proof runs in parallel. 

\subsection*{\texorpdfstring{$\H$-representations}{H-representations} as \texorpdfstring{$\C$-representations}{C-representations} with antiinvolutions}

As a first step, we want to phrase our question in terms of representations on complex vector spaces.

\begin{definition}\label{def:antiinvolutions}
Let $V$ be a complex vector space. An antilinear map $J:V\to V$ such that $J^2=1$ (resp. $J^2=-1$) is called \emph{antiinvolution of the first (resp. second) kind}. If $J_V, J_W$ are both antiinvolution of the first (resp. second) kind, we call a $\C$-linear map $f:V\to W$ that satisfies $J_W\circ f=f\circ J_V$ a \emph{morphism of the first (resp. second) kind}. We denote the set of morphisms of the first (resp. second) kind by $\Hom_\C(J_V,J_W)$.
\end{definition}

\begin{remark}\label{rem:equivalentcategories}
The category of quaternionic vector spaces is equivalent to the category of complex vector spaces with antiinvolutions of the second kind. A quaternionic vector space $W$ corresponds to a complex vector space $W_\C$ by restriction of scalars from $\H$ to $\C$. The multiplication with $j\in \H$ on $W$ is an antiinvolution of the second kind on $W_\C$. Given an $\H$-linear map $f:W_1\to W_2$, we get a morphism of the second kind $f\in\Hom_\C(J_1,J_2)$. In the other direction, given a complex vector spaces $V$ with an antiinvolution of the second kind $J$, we get a quaternionic vector space by equipping $V$ with a right $\H$-module structure such that multiplication with $j\in \H$ is given by $J$. A morphism of the second kind $f\in\Hom_\C(J_1,J_2)$ is in particular a $\C$-linear map.

The category of real vector spaces is equivalent to the category of complex vector spaces with antiinvolutions of the first kind. A real vector space $W$ corresponds to a complex vector space $W\otimes_\R \C$. The complex conjugation on $W\otimes_\R \C$ is an antiinvolution of the first kind. Given an $\R$-linear map $f:W_1\to W_2$, we get the morphism of the first kind $f\otimes_\R 1\in\Hom_\C(J_1,J_2)$. In the other direction, given a complex vector spaces $V$ with an antiinvolution of the first kind $J$, we get a real vector space $\Fix(J)$. Given a morphism of the first kind $f\in\Hom_\C(J_1,J_2)$, we get an $\R$-linear map $f_{\restriction\Fix(J_1)}:\Fix(J_1)\to \Fix(J_2)$.
\end{remark}

These equivalences enable us to see $\R$-representations and $\H$-representations as $\C$-representations with antiinvolutions:\begin{enumerate}
\item An $\R$-representation $\rho:\gog\to \End_\R(W)$ corresponds to a representation $\rho:\gog\to \End_\C(W\otimes_\R \C,J)$ into endomorphisms of the first kind. 
\item An $\H$-representation $\rho:\gog\to \End_\H(W)$ corresponds to a representation $\rho:\gog\to \End_\C(W_\C,J)$ into endomorphisms of the second kind.
\end{enumerate} From now on, we regard every $\R$- or $\H$-representation as a pair $(\rho,J)$ of a $\C$-representation $\rho$ and an antiinvolution $J$ of the relevant kind.

\begin{definition}
We say that a $\C$-representation $\rho$ \emph{admits an antiinvolution $J$} or that \emph{$J$ is an antiinvoluton on $\rho$}, if $(\rho,J)$ is an $\R$- or $\H$-representation. We call a $\C$-representation $\rho$\begin{enumerate}
\item \emph{real} if it admits an antiinvolution of the first kind.
\item \emph{quaternionic} if it admits an antiinvolution of the second kind.
\end{enumerate}
\end{definition}

Note that a $\C$-representation $\rho$ can be both real and quaternionic. We will see examples of this later (see \cref{lem:conjugateantiinvolutions}). Note also that a quaternionic (resp. real) representation $\rho$ could a priori admit two antiinvolution of the second (resp. first) kind $J_1$, $J_2$ such that $(\rho,J_1)$ and $(\rho, J_2)$ are non-equivalent $\H$-representations (resp. $\R$-representations). We will see in \cref{thm:RHequivalentiffCequivalent} that this is not the case.

In the literature, a $\C$-representation that does not admits an antiinvolution is sometimes called \emph{complex} (for example in \cite{Onishchik2004}). We will not use this term to avoid confusion. Instead we will characterize these representations in terms of conjugation.

\begin{definition}\label{def:conjugatevectorspace}
Let $V$ be a complex vector space. The \emph{complex conjugate vector space} $\overline{V}$ has the same additive group structure as $V$, but is equipped with the scalar multiplication $z*v=\overline{z}v$. A map $f:V\to W$ gives rise to a map $\overline{f}:\overline{V}\to \overline{W},v\mapsto f(v)$.
\end{definition}

Note that $\overline{\overline{V}}=V$. If $f$ is $\C$-linear or antilinear, then so is $\overline{f}$.

\begin{definition}\label{def:conjugaterepresentation}
The \emph{conjugate} of a $\C$-representation $\rho:\gog\to \End_\C(V)$ is the representation $\overline{\rho}:\gog\to\End_\C(\overline{V})$ given by $\overline{\rho}(X)=\overline{\rho(X)}$ for any $X\in \gog$. A representation $\rho:\gog\to \End_\C(V)$ is \emph{self-conjugated} if it is equivalent to its conjugate, i.e., if there is an equivariant $\C$-linear isomorphism $f:\overline{V}\to V$.
\end{definition}

Equivalently, a $\C$-representation $\rho:\gog\to \End_\C(V)$ is self-conjugated if there is an equivariant antilinear bijective map and therefore an equivariant antiinvolution $J:V\to V$. So a $\C$-representation is not self-conjugated if and only if it does not admit an antiinvolution.

We state the following easy observation, since we will use it a lot.

\begin{lemma}\label{lem:ubiquitousobservation}
Let $J:V\to V$ be an antilinear, $\rho$-equivariant map. Then every other antilinear, $\rho$-equivariant map $J':V\to V$ is given by $J'=f\circ J$ for some $f\in \End_\C(\rho)$.
\end{lemma}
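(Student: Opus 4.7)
The plan is direct: given $J$ and $J'$ both antilinear and $\rho$-equivariant, I would define $f := J' \circ J^{-1}$ and check that $f$ lies in $\End_\C(\rho)$ and satisfies $J' = f \circ J$. The key algebraic input is that the composition of two antilinear maps is $\C$-linear, since scalar multiplication is conjugated twice and returns to the identity: for $z \in \C$ and $v \in V$, one computes $f(zv) = J'(J^{-1}(zv)) = J'(\bar z\, J^{-1}(v)) = z\, J'(J^{-1}(v)) = z f(v)$. Equivariance of $f$ follows from equivariance of $J$ and $J'$ in one line: using $J \circ \rho(X) = \rho(X) \circ J$ one obtains $J^{-1} \circ \rho(X) = \rho(X) \circ J^{-1}$, and then
\[
f \circ \rho(X) \;=\; J' \circ J^{-1} \circ \rho(X) \;=\; J' \circ \rho(X) \circ J^{-1} \;=\; \rho(X) \circ J' \circ J^{-1} \;=\; \rho(X) \circ f .
\]
Hence $f \in \End_\C(\rho)$, and by construction $f \circ J = J' \circ J^{-1} \circ J = J'$.

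The only point to justify is the invertibility of $J$, which is needed to make sense of $J^{-1}$. In the intended applications this is automatic, since $J$ will be an antiinvolution of the first or second kind, hence $J^2 = \pm 1$ and $J^{-1} = \pm J$; more generally, if $\rho$ is irreducible then any nonzero antilinear equivariant map is a bijection by a Schur-type argument, so invertibility is not a real obstacle. Structurally, the content of the lemma is that once one antilinear equivariant map exists, the set of all such maps is a torsor under the commuting algebra $\End_\C(\rho)$, and I expect this to be the main organizational principle underlying the later classification: the classification of antiinvolutions of a given kind on $\rho$ reduces to analyzing which elements $f \in \End_\C(\rho)$ make $f \circ J$ again an antiinvolution of the desired kind (i.e., satisfy $(f \circ J)^2 = \pm 1$). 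There is no real obstacle in the proof itself; the lemma is essentially a bookkeeping observation, and the ``hard'' work lies not here but in exploiting it downstream.
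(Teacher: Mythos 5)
Your proof is correct and is essentially identical to the paper's one-line argument, which also sets $f = J'\circ J^{-1}$ and observes that the composite of two antilinear equivariant maps is $\C$-linear and equivariant. Your remark about the invertibility of $J$ is a fair point that the paper glosses over, but as you note it is harmless in all applications since $J$ is always an antiinvolution there.
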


\begin{proof}
$J'J^{-1}$ is a $\C$-linear, $\rho$-equivariant map and therefore an element of $\End_\C(\rho)$.
\end{proof}

\subsection*{From \texorpdfstring{$\H$-representations}{H-representations} to quaternionic representations}

By Weyl's theorem on complete reducibility every $\R$- or $\C$-representation of $\gog$ decomposes as a direct sum of irreducible $\R$- or $\C$-representations, respectively. This is also true for $\H$-representations. As a step toward proving this, we decompose $\H$-representations as direct sums of two types of $\H$-representations.

\begin{lemma}\label{lem:reduciblecandidates}
Every $\H$-representation $(\rho,J)$ of a real semisimple Lie algebra $\gog$ decomposes as a direct sum of $\H$-representations $(\rho_i,J_i)$, which are of one of the following two types:\begin{enumerate}
\item $\rho_i$ is an irreducible quaternionic representation.
\item $\rho_i=\phi\oplus \overline{\phi}$ for an irreducible $\C$-representation $\phi$.
\end{enumerate}
\end{lemma}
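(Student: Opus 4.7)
My plan is to apply Weyl's complete reducibility theorem to the underlying $\C$-representation $\rho$ and then reassemble the irreducible $\C$-summands into $J$-invariant blocks of types (1) and (2). The key observation is that, because $J$ is antilinear and $\gog$-equivariant, it sends every irreducible $\C$-subrepresentation isomorphic to $\phi$ to one isomorphic to $\overline\phi$, so $J$ permutes the $\C$-isotypic components $V_{[\phi]}$ of $\rho$ via the involution $[\phi]\leftrightarrow[\overline\phi]$. This lets me group them into $J$-invariant blocks of two shapes: a single component $V_{[\phi]}$ when $\phi\cong\overline\phi$, and a pair $V_{[\phi]}\oplus V_{[\overline\phi]}$ when $\phi\not\cong\overline\phi$. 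It then suffices to decompose each block into $J$-invariant pieces of the required types.

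The paired case is easy: I would pick any decomposition $V_{[\phi]}=\bigoplus_{i=1}^m V_\phi^{(i)}$ into irreducibles, and then each subspace $V_\phi^{(i)}\oplus J(V_\phi^{(i)})$ is a $J$-invariant summand of type (2), with these summands exhausting $V_{[\phi]}\oplus V_{[\overline\phi]}$ since $J^2=-1$. For the self-conjugated case I would write $V_{[\phi]}\cong \phi^{\oplus m}$, fix an equivariant antilinear $J_0$ on $\phi$, and note that $J_0^2=\pm 1$ by Schur's lemma, with the sign being an intrinsic invariant of $\phi$ (so $\phi$ is either real or quaternionic but not both). By \cref{lem:ubiquitousobservation}, the given antiinvolution $J$ on $V_{[\phi]}$ corresponds to a matrix $f\in M_m(\C)\cong \End_\gog(V_{[\phi]})$, and the condition $J^2=-\Id$ translates into $f\bar f=-J_0^{-2}\,I_m$. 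If $\phi$ is quaternionic this gives $f\bar f=I_m$, which is the coordinate form of an antiinvolution of the first kind on the multiplicity space $\C^m$; choosing a real basis of its fixed-point set splits $V_{[\phi]}$ into $m$ irreducible quaternionic summands of type (1). If $\phi$ is real this gives $f\bar f=-I_m$, which is an antiinvolution of the second kind on $\C^m$, forcing $m$ to be even and splitting $V_{[\phi]}$ into pieces isomorphic to $\phi\oplus\phi\cong\phi\oplus\overline\phi$ of type (2).

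The main obstacle I anticipate is the bookkeeping in the self-conjugated case. I must justify the real/quaternionic dichotomy for irreducible self-conjugated $\C$-representations using Schur's lemma (together with the fact that rescaling $J_0$ by $c$ rescales $J_0^2$ by $|c|^2>0$, so the sign of $J_0^2$ is an invariant), faithfully translate the antiinvolution $J$ into the matrix identity $f\bar f=\mp I_m$ on the multiplicity space, and then invoke the standard linear-algebra facts that a complex vector space with an antiinvolution of the first kind admits a real basis, while one with an antiinvolution of the second kind has even complex dimension and splits as a direct sum of $2$-dimensional $\sigma$-invariant subspaces.
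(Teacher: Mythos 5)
Your proposal is correct, and it reaches the lemma by a genuinely more structured route than the paper. The paper's own proof is a short induction: pick one irreducible $\C$-summand $V_1$, observe that $JV_1$ is again an irreducible invariant subspace, and peel off either $V_1$ itself (when $JV_1=V_1$, giving type (1)) or $V_1\oplus JV_1$ (when the intersection is trivial, giving type (2)), then recurse on a $J$-invariant complement. You instead pass to the full isotypic decomposition, note that $J$ implements the involution $[\phi]\leftrightarrow[\overline{\phi}]$ on isotypic components, and then analyze the antiinvolution on each self-conjugated isotypic block via its multiplicity space, reducing to the matrix identity $f\bar f=\mp I_m$ and the standard classification of antilinear (anti)involutions on $\C^m$. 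Both arguments are sound; the computation $J^2=J_0^2(\Id\otimes f\bar f)$ and the sign analysis check out. What your version buys is considerably more than the lemma asks for: it pins down the real/quaternionic dichotomy for self-conjugated irreducibles (\cref{lem:oneequivalent}), shows that a quaternionic isotypic block splits entirely into type-(1) pieces while a real one forces even multiplicity and splits into type-(2) pieces, and thereby essentially anticipates \cref{lem:conjugateequivalent} and \cref{cor:conjugateirreducible} in arbitrary multiplicity, where the paper later handles only the $m=2$ case by explicit $2\times 2$ matrix manipulations. The cost is that for the lemma as stated the inductive peeling is shorter, and it sidesteps the multiplicity-space bookkeeping entirely. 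A small point in your favour: the paper's phrase ``$JV_1$ is a $\rho$-invariant complex subspace and so $JV_1=V_j$ for some $j$'' is slightly imprecise, since $JV_1$ need not be one of the summands of the chosen decomposition but only a diagonal copy inside an isotypic component; your isotypic formulation avoids this issue automatically (as does the obvious repair of the paper's induction, replacing the fixed decomposition by a complement of $V_1+JV_1$).
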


\begin{proof}
Let $(\rho,J)$ be a $\H$-representation. Then $\rho$ is a $\C$-representation, which corresponds to a $\C$-linear representation $\rho_\C$ of the complex semisimple Lie algebra $\gog_\C$. By Weyl's theorem on complete irreducibility (see p. 241 in \cite{Knapp2002}), we know that $\rho_\C$ decomposes as a direct sum of irreducible representations of $\gog_\C$. Restricting again to $\gog$, we get a decomposition of $\rho$ into a direct sum of irreducible $\C$-representations $\phi_j:\gog\to \End_\C(V_j)$ for $1\leq j\leq k$. Since $J$ is $\rho$-equivariant and antilinear, $JV_1$ is a $\rho$-invariant complex subspace of $V$ and so $JV_1=V_j$ for some $1\leq j\leq k$. If $JV_1=V_1$, then we can set $(\rho_1,J_1)=(\phi_1,J_{\restriction V_1})$. So $\rho_1$ is an irreducible quaternionic representation and the statement follows by induction. If $JV_1\neq V_1$, we can assume without loss of generality that $JV_1=V_2$. Then by definition $\phi_2$ is conjugated to $\phi_1$. We set $(\rho_1,J_1)=(\phi_1\oplus\phi_2, J_{\restriction V_1\oplus V_2})$. Then $\rho_1\sim \phi_1\oplus \overline{\phi_1}$ and the statement follows by induction.
\end{proof}

This motivates us to study these types in detail.

\begin{lemma}\label{lem:oneequivalent}
Let $\rho$ be an irreducible $\C$-representation. Then it either does not admit an antiinvolution of any kind or a unique one up to equivalence. 
\end{lemma}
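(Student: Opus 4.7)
The plan is to exploit Schur's lemma together with the earlier observation \cref{lem:ubiquitousobservation}. Since $\rho$ is an irreducible $\C$-representation, Schur's lemma gives $\End_\C(\rho)=\C\cdot\Id$. So by \cref{lem:ubiquitousobservation}, any two antilinear $\rho$-equivariant maps $J,J':V\to V$ are related by $J'=c\,J$ for some $c\in\C^*$.

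First I would show that an irreducible $\rho$ cannot admit antiinvolutions of both kinds. If $J$ is any antilinear $\rho$-equivariant map and $J'=cJ$ is another one, then
\begin{equation*}
(J')^2=cJ\,cJ=c\bar c\,J^2=|c|^2 J^2.
\end{equation*}
Hence $(J')^2$ and $J^2$ have the same sign, so either both are antiinvolutions of the first kind or both are of the second kind (or neither is an antiinvolution at all). This already isolates at most one "kind" per irreducible $\rho$.

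Next, assuming $\rho$ admits some antiinvolution $J$, I would prove uniqueness up to equivalence among all antiinvolutions of that same kind. If $J'=cJ$ is another antiinvolution of the same kind, the calculation above forces $|c|^2=1$, so we may write $c=\lambda^2$ for some $\lambda\in\C$ with $|\lambda|=1$. I would then propose the $\C$-linear automorphism $\phi:=\lambda\cdot\Id_V$ as the candidate isomorphism $(V,J)\to(V,J')$; it is automatically $\rho$-equivariant (being a scalar). Checking compatibility with the antiinvolutions:
\begin{equation*}
J'\circ\phi(v)=cJ(\lambda v)=c\bar\lambda\,J(v)=\lambda^2\bar\lambda\,J(v)=\lambda\,J(v)=\phi\circ J(v),
\end{equation*}
using $|\lambda|=1$. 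Under the equivalence of categories in \cref{rem:equivalentcategories}, $\phi$ is then either an isomorphism of the underlying $\R$-representations (first kind) or of the underlying $\H$-representations (second kind), so $(\rho,J)$ and $(\rho,J')$ are equivalent in the appropriate sense.

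I do not anticipate a real obstacle here; the statement is essentially an exercise in Schur's lemma once the framework of \cref{lem:ubiquitousobservation} is in place. The only subtle point is making sure to extract a square root $\lambda$ of $c$ with $|\lambda|=1$ (possible since $c$ lies on the unit circle) so that the intertwining identity uses $\bar\lambda=\lambda^{-1}$, which is precisely what makes $\phi$ into an $\R$- or $\H$-linear isomorphism despite being a genuinely complex scalar.
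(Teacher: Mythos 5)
Your proposal is correct and follows essentially the same route as the paper: both apply Schur's lemma via \cref{lem:ubiquitousobservation} to write $J'=cJ$, deduce $(J')^2=|c|^2J^2$ to conclude the kinds agree and $|c|=1$, and then use a unit-modulus square root of $c$ times the identity as the intertwiner. No gaps.
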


\begin{proof}
Let $J_1, J_2$ be two antiinvolutions on $\rho$. Using \cref{lem:ubiquitousobservation} and Schur's Lemma we get $J_2=zJ_1$ for $z\in \C\setminus \{0\}$. Since $J_2^2=zJ_1zJ_1=\lvert z\rvert^2 J_1^2$ we get that $J_1, J_2$ are of the same kind and $z=e^{i\theta}$ for $\theta\in [0,2\pi)$. Let $x=e^{\frac{1}{2}i\theta}$. Then $x\Id\in\End_\C(\rho)$ and $J_2x=z\overline{x}J_1=xJ_1$, so $(\rho,J_1)$ and $(\rho,J_2)$ are equivalent.
\end{proof}

\begin{definition}\label{def:oneirreducible}
Let $\rho$ be an irreducible $\C$-representation. If $\rho$ admits an antiinvolution, we denote it by $J_\rho$. We also define the following abbreviaton: \begin{equation*}
\varepsilon(\rho)=\begin{cases}
-1, &\text{ if $\rho$ is quaternionic}\\
\varnothing, &\text{ if $\rho$ is not self-conjugated}\\
1, &\text{ if $\rho$ is real}\end{cases}.
\end{equation*}
\end{definition}

\begin{corollary}\label{cor:oneirreducible}
Let $\rho$ be an irreducible $\C$-representation, that admits an antiinvolution $J$. Then the $\R$- or $\H$-representation $(\rho,J_\rho)$ is also irreducible.
\end{corollary}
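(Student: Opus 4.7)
The plan is to transport any potential proper $\R$- or $\H$-subrepresentation of $(\rho,J_\rho)$ back through the equivalence of categories of \cref{rem:equivalentcategories} and observe that, viewed as a $\C$-subspace inside a $\C$-irreducible representation, it can only be $\{0\}$ or the whole space.

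First I would unravel the definitions. Under the equivalence between quaternionic vector spaces and complex vector spaces equipped with an antiinvolution of the second kind (and analogously in the real case), a sub-$\H$-module (respectively a sub-$\R$-vector space) of the underlying module of $(\rho,J_\rho)$ corresponds precisely to a $\C$-subspace $W\subseteq V$ that is stable under $J_\rho$. Asking that it be a sub-representation $(\rho,J_\rho)$ adds exactly the requirement that $W$ also be $\rho(\gog)$-invariant. Thus $\R$- or $\H$-subrepresentations of $(\rho,J_\rho)$ are in bijection with $\C$-subspaces of $V$ that are simultaneously $\rho$-invariant and $J_\rho$-invariant.

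Second I would invoke the hypothesis that $\rho$ is $\C$-irreducible: the only $\rho(\gog)$-invariant $\C$-subspaces of $V$ are $\{0\}$ and $V$ itself. Both of these are automatically $J_\rho$-invariant, and under the categorical equivalence they correspond to the two trivial subrepresentations of $(\rho,J_\rho)$. Hence $(\rho,J_\rho)$ has no proper nonzero $\R$- or $\H$-subrepresentation, and is irreducible.

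I do not anticipate a genuine obstacle: the statement is essentially a bookkeeping consequence of the equivalence of categories in \cref{rem:equivalentcategories} together with the $\C$-irreducibility hypothesis, and requires none of the Schur-lemma computation used in the proof of \cref{lem:oneequivalent}.
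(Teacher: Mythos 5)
Your proposal is correct and is essentially the paper's own argument: the paper likewise observes that any $\R$- or $\H$-subrepresentation of $(\rho,J_\rho)$ has, as its underlying object, a $\C$-subrepresentation of $\rho$, which by $\C$-irreducibility must be trivial. Your extra care in spelling out the bijection between $\bK$-subrepresentations and $(\rho,J_\rho)$-stable $\C$-subspaces via \cref{rem:equivalentcategories} is a harmless elaboration of the same step, and you are right that no Schur-lemma input is needed here.
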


\begin{proof}
Let $(\rho',J')$ be a $\R$- or $\H$-subrepresentation of $(\rho,J)$. Then in particular, $\rho'$ is a $\C$-subrepresentation of $\rho$. Since $\rho$ is irreducible, we get $\rho'=\rho$ and therefore $(\rho',J')=(\rho,J)$.
\end{proof}

The converse is not true: Not every irreducible $\H$-representation (or $\R$-representation) $(\rho,J)$ comes from an irreducible $\C$-representation $\rho$. We will give examples of this soon.

\begin{lemma}\label{lem:conjugateantiinvolutions}
Let $\rho:\gog\to \End_\C(V)$ be a $\C$-representation of $\gog$.\begin{enumlem}
\item\label{lem:conjugatereal} Set \begin{equation*} J_\R:V\oplus\overline{V}\to V\oplus\overline{V},\ (v_1,v_2)\mapsto (v_2,v_1).\end{equation*}$J_\R$ is an antiinvolution of the first kind on $\rho\oplus\overline{\rho}$. The $\R$-representation $(\rho\oplus\overline{\rho},J_\R)$ is equivalent to the $\R$-representation ${\phi:\gog\to\End_\R(V)}$ given by $\phi(X)=\rho(X)$ for all $X\in\gog$.
\item\label{lem:conjugatequaternionic} Set \begin{equation*}
J_\H:V\oplus\overline{V}\to V\oplus\overline{V},\ (v_1,v_2)\mapsto (-v_2,v_1).
\end{equation*}$J_\H$ is an antiinvolution of the second kind on $\rho\oplus\overline{\rho}$. The $\H$-representation $(\rho\oplus\overline{\rho},J_\H)$ is equivalent to the $\H$-representation $\phi:\gog\to\End_\H(V\otimes_\C \H)$ given by $\phi(X)=\rho(X)\otimes_\C 1$ for all $X\in\gog$.
\end{enumlem}
\end{lemma}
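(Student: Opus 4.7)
The plan is to verify each part by direct computations dictated by the equivalence of categories in \cref{rem:equivalentcategories}. For both parts, I would first check that $J$ is antilinear and has the correct square (using that scalar multiplication by $z\in\C$ on $\overline{V}$ is $\overline{z}\cdot_V$), and that $J$ commutes with $(\rho\oplus\overline{\rho})(X)$ for every $X\in\gog$; the latter is immediate because the underlying set maps of $\rho(X)$ and $\overline{\rho}(X)$ coincide. What remains is to identify the resulting $\R$- or $\H$-representation with the concrete $\phi$ in each case.

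For part (a), under the equivalence in \cref{rem:equivalentcategories} the $\R$-representation associated to $(\rho\oplus\overline{\rho},J_\R)$ is the representation on $\Fix(J_\R)=\{(v,v):v\in V\}$. The map $\Fix(J_\R)\to V$, $(v,v)\mapsto v$, is an $\R$-linear bijection, and it intertwines the induced action of $(\rho\oplus\overline{\rho})(X)$ with $\phi(X)=\rho(X)$ viewed as an $\R$-linear endomorphism of $V$, giving the stated equivalence.

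For part (b), consider the $\C$-decomposition $V\otimes_\C\H=(V\otimes 1)\oplus(V\otimes j)$. Using $jz=\overline{z}j$ in $\H$, the right $\C$-action on $V\otimes j$ satisfies $(v\otimes j)\cdot z=v\overline{z}\otimes j$, so the map $v\otimes j\mapsto v$ identifies $V\otimes j$ with $\overline{V}$ as complex vector spaces. Hence $V\otimes_\C\H\to V\oplus\overline{V}$, $v_1\otimes 1+v_2\otimes j\mapsto(v_1,v_2)$, is a $\C$-linear isomorphism. Right multiplication by $j$ sends $v_1\otimes 1+v_2\otimes j$ to $-v_2\otimes 1+v_1\otimes j$, which corresponds to $J_\H(v_1,v_2)=(-v_2,v_1)$; and $\phi(X)=\rho(X)\otimes 1$ visibly corresponds to $(\rho\oplus\overline{\rho})(X)$ under this isomorphism.

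The only subtle point, and the place where care is required, is the complex structure on $V\otimes_\C\H$ obtained by restricting scalars from $\H$ to $\C$: because $j$ anticommutes with $\C$ inside $\H$, the summand $V\otimes j$ picks up conjugation and is a copy of $\overline{V}$, not of $V$. Once this is recognized, all remaining claims reduce to unwinding the definitions.
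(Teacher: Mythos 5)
Your proposal is correct and follows essentially the same route as the paper: direct verification that $J_\R$ and $J_\H$ are equivariant antiinvolutions of the right kind, identification of the $\R$-representation with the action on $\Fix(J_\R)=\{(v,v)\}$ via $(v,v)\mapsto v$, and the explicit isomorphism $v_1\otimes 1+v_2\otimes j\leftrightarrow(v_1,v_2)$ intertwining right multiplication by $j$ with $J_\H$. Your remark that $V\otimes j$ carries the conjugate complex structure because $jz=\overline{z}j$ is exactly the point the paper's $\H$-linearity computation verifies.
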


\begin{proof}
We will first show \cref{lem:conjugatereal}. We have to show that $J_\R$ is an equivariant antilinear map such that $J_\R^2=1$. We have that $J_\R$ is antilinear, since\begin{equation*}
J_\R(z(v_1,v_2))=J_\R(zv_1,\overline{z}v_2)=(\overline{z}v_2,zv_1)=\overline{z}(v_2,v_1)=\overline{z}J_\R(v_1,v_2).
\end{equation*} We have that $J_\R$ is equivariant, since\begin{align*}
J_\R\circ (\rho\oplus\overline{\rho})(X)(v_1,v_2)&=J_\R(\rho(X)v_1,\rho(X)v_2)=(\rho(X)v_2,\rho(X)v_1)\\
&=(\rho\oplus\overline{\rho})(X)\circ J_\R(v_1,v_2).
\end{align*} We have $J_\R^2(v_1,v_2)=(v_1,v_2)$, so $J_\R$ is an antiinvolution of the first kind. The fixpoint set of $J_\R$ is the real vector space $W=\{(v,v)\}\subset V\oplus \overline{V}$. We will show now that the representation $(\rho\oplus\overline{\rho})_{\restriction W}$ is equivalent to $\phi:\gog\to\End_\R(V_\R)$ given by restriction of scalars. We define $f:W\to V_\R, (v,v)\mapsto v$. This is an $\R$-linear isomorphism, since\begin{equation*}
f(r(v,v))=f(rv,\overline{r}v)=f(rv,rv)=rv.
\end{equation*} It is also equivariant, since \begin{equation*}
f\circ(\rho\oplus\overline{\rho})_{\restriction W}(X) (v,v)=f(\rho(X)v,\rho(X)v)=\rho(X)v.
\end{equation*} So $f$ is an intertwiner.

We will now show \cref{lem:conjugatequaternionic}. $J_\H=(-1,1)J_\R$ is an equivariant antilinear map by the proof of \cref{lem:conjugatereal}. We have $J_\H^2(v_1,v_2)=(-v_1,-v_2)$, so $J_\H$ is an antiinvolution of the second kind. We equip $V\oplus\overline{V}$ with a right $\H$-module structure by setting $(v,w)\cdot j=J_\H(v,w)$. We will show now that $\rho\oplus\overline{\rho}:\gog\to \End_\H(V\oplus\overline{V})$ is equivalent to $\phi:\gog\to\End_\H(V\otimes_\C \H)$ given by $\phi(X)=\rho\otimes_\C 1$ for all $X\in\gog$. We define $f:V\oplus\overline{V}\to V\otimes_\C \H, {(v,w)\mapsto v\otimes 1+w\otimes j}$. This is an $\H$-linear isomorphism, since for $z\in \C$ we have\begin{align*}
f((v,w)z)&=f(zv,\overline{z}w)=zv\otimes 1+\overline{z}w\otimes j=v\otimes z+w\otimes \overline{z}j\\
&=v\otimes z+w\otimes jz=(v\otimes 1+w\otimes j)z=f(v,w)z,\\
f((v,w)j)&=f(-w,v)=-w\otimes 1+v\otimes j=(v\otimes 1+w\otimes j)j=f(v,w)j.
\end{align*}It is also equivariant, since \begin{align*}
f\circ \phi(X)(v,w)&=f(\rho(X)v,\rho(X)w)=\rho(X)v\otimes 1+\rho(X)w\otimes j\\
&=\phi(X)(v\otimes 1+w\otimes j)=\phi(X)f(v,w).\end{align*} So $f$ is an intertwiner.
\end{proof}

\begin{lemma}\label{lem:conjugateequivalent}
Let $\rho$ be an irreducible $\C$-representation and let $J$ be an antiinvolution on $\rho\oplus\overline{\rho}$.\begin{enumlem}
\item If $J$ is an antiinvolution of the first kind, then $(\rho\oplus\overline{\rho},J)$ is equivalent to $(\rho\oplus\overline{\rho},J_\R)$.
\item If $J$ is an antiinvolution of the second kind, then $(\rho\oplus\overline{\rho},J)$ is equivalent to $(\rho\oplus\overline{\rho},J_\H)$.
\end{enumlem}
\end{lemma}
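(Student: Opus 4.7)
The plan is to translate the statement into a Hilbert~90-type algebraic identity inside the endomorphism algebra $A := \End_\C(\rho\oplus\overline{\rho})$ and solve it via an explicit ansatz. Applying \cref{lem:ubiquitousobservation} with $J_\R$ as reference, any antiinvolution $J$ on $\rho\oplus\overline{\rho}$ takes the form $J = f J_\R$ for a unique $f \in A^\times$; invertibility is automatic because $J$ and $J_\R$ are bijections. I would define $\sigma \colon A \to A$ by $\sigma(g) := J_\R g J_\R^{-1}$; since $J_\R$ is antilinear and $J_\R^2 = \Id$, the map $\sigma$ is an antilinear ring automorphism of $A$ of order two. Under this dictionary, the antiinvolution condition $J^2 = \varepsilon \in \{\pm 1\}$ becomes $f\sigma(f) = \varepsilon$, hence $\sigma(f) = \varepsilon f^{-1}$.

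Finding a $\C$-linear equivariant isomorphism $\phi$ intertwining $J_\R$ with $J$ in case~(a) then reduces to the equation $\phi = f\sigma(\phi)$ with $\phi \in A^\times$. For case~(b), I would write $J_\H = h J_\R$ for $h := -\Id_V + \Id_{\overline{V}}$, observe that $h\sigma(h) = -1$ by direct computation, and reformulate the intertwining condition $\phi J_\H = J\phi$ as $\phi h = f\sigma(\phi)$. The main step is to propose the explicit candidates
\begin{equation*}
\phi := a\, \Id + \overline{a}\, f \quad \text{in case (a)}, \qquad \phi := a\, \Id + \overline{a}\, f h^{-1} \quad \text{in case (b)},
\end{equation*}
parametrized by $a \in \C^\times$. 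A short algebraic check, using $\sigma(f) = \varepsilon f^{-1}$ together with $\sigma(h) = -h^{-1}$, verifies that both candidates satisfy the required intertwining identity for every choice of $a$.

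It remains to secure invertibility of $\phi$, which amounts to requiring that $-a/\overline{a}$ is not an eigenvalue of $f$ (case (a)) or of $f h^{-1}$ (case (b)). Since each of these endomorphisms has only finitely many eigenvalues and $a \mapsto a/\overline{a}$ surjects $\C^\times$ onto the unit circle, only finitely many arguments of $a$ are forbidden and a suitable $a$ always exists. The conceptual obstacle I anticipate is recognizing the correct ansatz: the key observation is that the coboundary pattern $\phi = x + f\sigma(x)$ solves the intertwining equation for every $x \in A$, and that restricting to the scalar choice $x = a\Id$ already produces enough candidates to ensure invertibility. With this insight both cases follow from a single uniform argument, the additional twist by $h^{-1}$ in case~(b) being dictated by the algebra of $\sigma$ and $h$; notably, no case split on whether $\rho$ is self-conjugated (and hence on the structure of $A$ as $\C\oplus\C$ or $M_2(\C)$) is required.
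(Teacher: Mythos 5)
Your proposal is correct, and it takes a genuinely different route from the paper's. Both arguments begin the same way, using \cref{lem:ubiquitousobservation} to write $J=fJ_\R$ with $f$ an invertible element of $A=\End_\C(\rho\oplus\overline{\rho})$, and both must then produce an invertible equivariant $\phi$ solving the intertwining equation $\phi=f\sigma(\phi)$ (resp.\ $\phi h=f\sigma(\phi)$ with $h=-\Id_V+\Id_{\overline{V}}$), where $\sigma(g)=J_\R gJ_\R$. The paper does this by splitting into the cases $A\cong\C\oplus\C$ ($\rho$ not self-conjugated) and $A\cong M_2(\C)$ ($\rho$ self-conjugated), the latter requiring a further five-fold sub-case analysis on the matrix entries of $f$. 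Your coboundary ansatz $\phi=a\Id+\overline{a}f$ (resp.\ $\phi=a\Id+\overline{a}fh^{-1}$) dispatches all cases at once: I verified that $\sigma$ is an antilinear ring automorphism of $A$ with $\sigma(f)=\pm f^{-1}$ and $\sigma(h)=-h=-h^{-1}$, that $h\in A$ because $V$ and $\overline{V}$ are subrepresentations, and that $f\sigma(\phi)=\overline{a}f+a\Id=\phi$ in case (a) and $f\sigma(\phi)=\overline{a}f+ah=\phi h$ in case (b). The invertibility step is also sound, since $f$ (resp.\ $fh^{-1}$) has only finitely many eigenvalues while $-a/\overline{a}$ sweeps the whole unit circle. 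What your approach buys is brevity, uniformity, and extra generality --- nowhere do you actually use irreducibility of $\rho$, so the same argument classifies antiinvolutions on $\rho\oplus\overline{\rho}$ for arbitrary $\rho$; what the paper's computation buys is a completely explicit intertwiner in terms of the matrix of $f$, which is not needed elsewhere in the article.
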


\begin{proof}
The $\C$-representation $\rho:\gog\to \End_\C(V)$ is either self-conjugated or not. Assume first that $\rho$ is not self-conjugated, i.e., $\rho$ is not equivalent to $\overline{\rho}$. Then $\End_\C(\rho\oplus\overline{\rho})=\C\oplus\C$. So by \cref{lem:ubiquitousobservation} we have $J=(z_1,z_2)J_\R$. Since $J$ is an antiinvolution, we have \begin{equation*}
\pm (v_1,v_2)=J^2(v_1,v_2)=(z_1,z_2)J_\R(z_1,z_2)J_\R(v_1,v_2)=(z_1\overline{z_2}v_1,\overline{z_2}z_1v_2).
\end{equation*} So $z_1\overline{z_2}=\pm 1$ depending on whether $J$ is an aniinvolution of the first or second kind. If $J$ is an antiinvolution of the first kind, we have ${J=(z,\overline{z}^{-1})J_\R}$ for some $z\in \C\setminus\{0\}$. Then $J$ is equivalent to $J_\R$, since $(1,\overline{z})\in \End_\C(\rho\oplus\overline{\rho})$ is an isomorphism and\begin{align*}
(1,\overline{z})J(v_1,v_2)&=(1,\overline{z})(z,\overline{z}^{-1})J_\R(v_1,v_2)=(z,1)(v_2,v_1)\\
&=(zv_2,v_1)=J_\R(1,\overline{z})(v_1,v_2).
\end{align*} If $J$ is an antiinvolution of the second kind, then $J=(-z,\overline{z}^{-1})J_\R$ is equivalent to $(-1,1)J_\R=J_\H$ for the same reason.

Now assume that $\rho$ is self-conjugated, i.e., there is an equivariant $\C$-linear isomorphism $\phi:\overline{V}\to V$. We define \begin{equation*}
F:V\oplus\overline{V}\to V\oplus V,\ (v,w)\mapsto (v,\phi(w)).
\end{equation*} The isomorphism $F$ intertwines the representation $\rho\oplus\overline{\rho}$ with $\rho\oplus\rho$. So we get $\End_\C(\rho\oplus\overline{\rho})=F^{-1}\End_\C(\rho\oplus\rho)F=F^{-1}M_2(\C)F$. So by \cref{lem:ubiquitousobservation} we have $J=F^{-1}\begin{pmatrix}
a&b\\
c&d
\end{pmatrix}F J_\R$ for some $\begin{pmatrix}
a&b\\
c&d
\end{pmatrix}\in M_2(\C)$. A useful computation is \begin{equation*}
J_\R F^{-1}\begin{pmatrix}
a&b\\
c&d
\end{pmatrix}F=F^{-1}\begin{pmatrix}
\overline{d}& \varepsilon(\rho)\overline{c}\\
\varepsilon(\rho)\overline{b}&\overline{a}
\end{pmatrix}F J_\R.
\end{equation*}So since $J$ is an antiinvolution we have \begin{align*}
\pm (v_1,v_2)&=J^2(v_1,v_2)=F^{-1}\begin{pmatrix}
a&b\\
c&d
\end{pmatrix}F J_\R F^{-1}\begin{pmatrix}
a&b\\
c&d
\end{pmatrix}F J_\R(v_1,v_2)\\
&=F^{-1}\begin{pmatrix}
a&b\\
c&d
\end{pmatrix}F F^{-1}\begin{pmatrix}
\overline{d}& \varepsilon(\rho)\overline{c}\\
\varepsilon(\rho)\overline{b}&\overline{a}
\end{pmatrix}F J_\R J_\R (v_1,v_2)\\
&=F^{-1}\begin{pmatrix}
a&b\\
c&d
\end{pmatrix}\begin{pmatrix}
\overline{d}& \varepsilon(\rho)\overline{c}\\
\varepsilon(\rho)\overline{b}&\overline{a}
\end{pmatrix}F(v_1,v_2).\end{align*} So depending on whether $J$ is an antiinvolution of first or second kind, we have \begin{equation*}
\pm \Id=\begin{pmatrix}
a&b\\
c&d
\end{pmatrix}\begin{pmatrix}
\overline{d}& \varepsilon(\rho)\overline{c}\\
\varepsilon(\rho)\overline{b}&\overline{a}
\end{pmatrix}=\begin{pmatrix}
a\overline{d}+\varepsilon(\rho)\lvert b\rvert^2& a\varepsilon(\rho)\overline{c}+b\overline{a}\\
c\overline{d}+d\varepsilon(\rho)\overline{b}&d\overline{a}+\varepsilon(\rho)\lvert c\rvert^2
\end{pmatrix}.
\end{equation*}

Let $J$ be an antiinvolution of the first kind. Then ${(\rho\oplus\overline{\rho},J)}$ is equivalent to ${(\rho\oplus\rho,J_\R)}$ if there is an isomorphism $F^{-1}\begin{pmatrix}
e&f\\
g&h
\end{pmatrix}F\in \End_\C(\rho\oplus\overline{\rho})$ such that \begin{equation*}F^{-1}\begin{pmatrix}
e&f\\
g&h
\end{pmatrix}FJ=J_\R F^{-1}\begin{pmatrix}
e&f\\
g&h
\end{pmatrix}F.\end{equation*} We check that this condition is satisfied for every antiinvolution of the first kind $J$. \begin{enumerate}
\item Assume $a\neq 0$. Then $F^{-1}\begin{pmatrix}
1&0\\
\varepsilon(\rho)\overline{b}&\overline{a}
\end{pmatrix}F\in \End_\C(\rho\oplus\overline{\rho})$ is an isomorphism and \begin{align*}
F^{-1}\begin{pmatrix}
1&0\\
\varepsilon(\rho)\overline{b}&\overline{a}
\end{pmatrix}F J&=F^{-1}\begin{pmatrix}
1&0\\
\varepsilon(\rho)\overline{b}&\overline{a}
\end{pmatrix}F F^{-1}\begin{pmatrix}
a&b\\
c&d
\end{pmatrix}F J_\R\\
&=F^{-1}\begin{pmatrix}
a&b\\
\varepsilon(\rho)\overline{b}a+\overline{a}c&\varepsilon(\rho)\lvert b\rvert^2+\overline{a}d
\end{pmatrix}F J_\R\\
&=F^{-1}\begin{pmatrix}
a&b\\
0&1
\end{pmatrix}F J_\R\\
&=J_\R F^{-1}\begin{pmatrix}
1&0\\
\varepsilon(\rho)\overline{b}&\overline{a}
\end{pmatrix}F.
\end{align*}
\item Assume $d\neq 0$. Then $F^{-1}\begin{pmatrix}
\overline{d}&\varepsilon(\rho)\overline{c}\\
0&1
\end{pmatrix}F\in \End_\C(\rho\oplus\overline{\rho})$ is an isomorphism and \begin{align*}
F^{-1}\begin{pmatrix}
\overline{d}&\varepsilon(\rho)\overline{c}\\
0&1
\end{pmatrix}F J&=F^{-1}\begin{pmatrix}
\overline{d}&\varepsilon(\rho)\overline{c}\\
0&1
\end{pmatrix}F F^{-1}\begin{pmatrix}
a&b\\
c&d
\end{pmatrix}F J_\R\\
&=F^{-1}\begin{pmatrix}
\varepsilon(\rho)\lvert c\rvert^2&\overline{d}b+\varepsilon(\rho)\overline{c}d\\
c&d\end{pmatrix}F J_\R\\
&=F^{-1}\begin{pmatrix}
1&0\\
c&d
\end{pmatrix}F J_\R\\
&=J_\R F^{-1}\begin{pmatrix}
\overline{d}&\varepsilon(\rho)\overline{c}\\
0&1
\end{pmatrix}F.
\end{align*}
\item Assume $a=d=0$ and therefore $\varepsilon(\rho)=1$. Assume that $\overline{b}\overline{c}\neq 1$. Then ${F^{-1}\begin{pmatrix}
1&\overline{c}\\
\overline{b}&1
\end{pmatrix}F\in \End_\C(\rho\oplus\overline{\rho})}$ is an isomorphism and \begin{align*}
F^{-1}\begin{pmatrix}
1&\overline{c}\\
\overline{b}&1
\end{pmatrix}f J&=F^{-1}\begin{pmatrix}
1&\overline{c}\\
\overline{b}&1
\end{pmatrix}F F^{-1}\begin{pmatrix}
0&b\\
c&0
\end{pmatrix}F J_\R\\
&=F^{-1}\begin{pmatrix}
\lvert c\rvert^2&b\\
c&\lvert b\rvert^2\end{pmatrix}F J_\R\\
&=F^{-1}\begin{pmatrix}
1&b\\
c&1
\end{pmatrix}F J_\R\\
&=J_\R F^{-1}\begin{pmatrix}
1&\overline{c}\\
\overline{b}&1
\end{pmatrix}F.
\end{align*}
\item Assume $a=d=0$ and therefore $\varepsilon(\rho)=1$. Assume that $\overline{b}\overline{c}=1$ and $b\neq \overline{b}$. Then ${F^{-1}\begin{pmatrix}
1&1\\
\overline{b}&b
\end{pmatrix}F\in {\End_\C(\rho\oplus\overline{\rho})}}$ is an isomorphism and \begin{align*}
F^{-1}\begin{pmatrix}
1&1\\
\overline{b}&b
\end{pmatrix}F J&=F^{-1}\begin{pmatrix}
1&1\\
\overline{b}&b
\end{pmatrix}F F^{-1}\begin{pmatrix}
0&b\\
\overline{b}&0
\end{pmatrix}F J_\R\\
&=F^{-1}\begin{pmatrix}
\overline{b}&b\\
1&1\end{pmatrix}F J_\R\\
&=J_\R F^{-1}\begin{pmatrix}
1&1\\
\overline{b}&b
\end{pmatrix}F.\end{align*}
\item Assume $a=d=0$ and therefore $\varepsilon(\rho)=1$. Assume that $\overline{b}\overline{c}=1$ and $b=\overline{b}$. Then ${b\in \{-1,1\}}$. Then \begin{equation*}
F^{-1}\begin{pmatrix}
1&-i\\
1&i
\end{pmatrix}F,\ F^{-1}\begin{pmatrix}
1&i\\
-1&i
\end{pmatrix}F\in \End_\C(\rho\oplus\overline{\rho})
\end{equation*}
are isomorphisms and \begin{align*}
F^{-1}\begin{pmatrix}
1&-i\\
1&i
\end{pmatrix}FJ&=F^{-1}\begin{pmatrix}
1&-i\\
1&i
\end{pmatrix}FF^{-1}\begin{pmatrix}
0&1\\
1&0
\end{pmatrix}F J_\R\\
&=F^{-1}\begin{pmatrix}
-i&1\\
i&1\end{pmatrix}F J_\R=J_\R F^{-1}\begin{pmatrix}
1&-i\\
1&i
\end{pmatrix}F\\
F^{-1}\begin{pmatrix}
1&i\\
-1&i
\end{pmatrix}FJ&=F^{-1}\begin{pmatrix}
1&i\\
-1&i
\end{pmatrix}FF^{-1}\begin{pmatrix}
0&-1\\
-1&0
\end{pmatrix}F J_\R\\
&=F^{-1}\begin{pmatrix}
-i&-1\\
-i&1\end{pmatrix}F J_\R=J_\R F^{-1}\begin{pmatrix}
1&i\\
-1&i
\end{pmatrix}F.\end{align*}
\end{enumerate} This shows that $J$ is equivalent to $J_\R$.

If $J$ is an antiinvolution of the second kind, it is equivalent to $F^{-1}\begin{pmatrix}
-1&0\\
0&1
\end{pmatrix}F J_\R=J_\H$ for the same reasons.
\end{proof}

\begin{corollary}\label{cor:conjugateirreducible}
Let $\rho$ be an irreducible $\C$-representation of a real semisimple Lie algebra $\gog$. We can conclude the following:\begin{enumerate}
\item If $\rho$ is not self-conjugated, then $(\rho\oplus\overline{\rho},J_\R)$ and $(\rho\oplus\overline{\rho},J_\H)$ are irreducible.
\item If $\rho$ is real, then \begin{itemize}
\item $(\rho\oplus\overline{\rho},J_\R)$ is equivalent to $(\rho,J_\rho)\oplus(\rho,J_\rho)$,
\item $(\rho\oplus\overline{\rho},J_\H)$ is irreducible.
\end{itemize}
\item If $\rho$ is quaternionic, then \begin{itemize}
\item $(\rho\oplus\overline{\rho},J_\R)$ is irreducible,
\item $(\rho\oplus\overline{\rho},J_\H)$ is equivalent to $(\rho,J_\rho)\oplus(\rho,J_\rho)$.
\end{itemize}
\end{enumerate}
\end{corollary}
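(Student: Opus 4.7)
I split the argument by the type of $\rho$ and, in each case, reduce the claim to a statement about the $\C$-subrepresentation lattice of $\rho\oplus\overline{\rho}$. The underlying observation is that an $\R$- or $\H$-subrepresentation of $(\rho\oplus\overline{\rho},J)$ is exactly a $J$-invariant $\C$-subrepresentation, so I only need to understand which $\C$-subrepresentations of $\rho\oplus\overline{\rho}$ are $J_\R$- or $J_\H$-invariant. The classification of antiinvolutions \cref{lem:conjugateequivalent} and the uniqueness statement \cref{lem:oneequivalent} supply the remaining inputs.

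In the case where $\rho$ is not self-conjugated, Schur's lemma (together with $\rho\not\cong\overline{\rho}$) limits the $\C$-subrepresentations of $\rho\oplus\overline{\rho}$ to $\{0\}$, $V$, $\overline{V}$ and $V\oplus\overline{V}$. The explicit formulas in \cref{lem:conjugateantiinvolutions} show that both $J_\R$ and $J_\H$ swap $V$ and $\overline{V}$, so neither of the two proper nontrivial $\C$-subrepresentations is invariant. Hence both $(\rho\oplus\overline{\rho},J_\R)$ and $(\rho\oplus\overline{\rho},J_\H)$ are irreducible.

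In the case where $\rho$ is real I would first establish the decomposition $(\rho\oplus\overline{\rho},J_\R)\cong(\rho,J_\rho)\oplus(\rho,J_\rho)$. The right-hand side is an $\R$-representation whose underlying $\C$-representation is $\rho\oplus\rho$; transporting $J_\rho\oplus J_\rho$ through any $\C$-linear equivariant isomorphism $\rho\oplus\rho\cong\rho\oplus\overline{\rho}$ (which exists because $\rho$ is self-conjugated) produces a first-kind antiinvolution on $\rho\oplus\overline{\rho}$, which \cref{lem:conjugateequivalent} identifies with $J_\R$ up to equivalence. For irreducibility of $(\rho\oplus\overline{\rho},J_\H)$, any proper nontrivial $\H$-subrepresentation $V'$ would be a proper nontrivial $J_\H$-invariant $\C$-subrepresentation. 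Since $\rho\oplus\overline{\rho}$ is $\rho$-isotypic, $V'\cong\rho$ as $\C$-representations, and then $(V',J_\H|_{V'})$ would endow $\rho$ with an antiinvolution of the second kind, contradicting \cref{lem:oneequivalent}: because $\rho$ already admits the first-kind antiinvolution $J_\rho$, it cannot admit one of the second kind.

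The case where $\rho$ is quaternionic is entirely symmetric: swap $J_\R\leftrightarrow J_\H$, first kind $\leftrightarrow$ second kind, and the roles of $\R$- and $\H$-representations throughout. The one step that merits care is the irreducibility argument in the self-conjugated cases: naively there is a whole $\mathbb{P}^1$-family of copies of $\rho$ sitting inside $\rho\oplus\overline{\rho}$, and one could imagine checking each for $J$-invariance individually. The clean route, taken above, is to invoke \cref{lem:oneequivalent} as a structural statement ruling out the wrong-kind antiinvolution in one stroke, thereby bypassing any enumeration of these subrepresentations.
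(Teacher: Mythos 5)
Your proposal is correct and follows essentially the same route as the paper: identify the proper $\C$-subrepresentations of $\rho\oplus\overline{\rho}$, use \cref{lem:conjugateequivalent} to obtain the decomposition in the self-conjugated cases, and use \cref{lem:oneequivalent} to rule out a wrong-kind antiinvolution on an irreducible summand (the paper phrases the non-self-conjugated case as "no antiinvolution exists on $\rho$ or $\overline{\rho}$" where you note that $J$ swaps the two factors, but these are the same observation). No gaps.
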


\begin{proof}
Let $(\rho',J')$ be an $\R$- or $\H$-subrepresentation of ${(\rho\oplus\overline{\rho},J_\H)}$ or ${(\rho\oplus\overline{\rho},J_\R)}$, respectively. Then $\rho'$ is a $\C$-subrepresentation of $\rho\oplus\overline{\rho}$. Any proper $\C$-subrepresentations of $\rho\oplus\overline{\rho}$ is equivalent to $\rho$ or $\overline{\rho}$. 

If $\rho$ is not self-conjugated, neither $\rho$ nor $\overline{\rho}$ admit any antiinvolution. So $(\rho\oplus\overline{\rho},J_\R)$ and $(\rho\oplus\overline{\rho},J_\H)$ are irreducible.
If $\rho$ is real, then $\rho\oplus\overline{\rho}=\rho\oplus\rho$ admits an antiinvolution of the first kind $J_\rho\oplus J_\rho$. By \cref{lem:conjugateequivalent} $(\rho\oplus\overline{\rho},J_\R)$ is equivalent to $(\rho,J_\rho)\oplus(\rho,J_\rho)$. Since $\rho$ does not admit an antiinvolution of the second kind by \cref{lem:oneequivalent}, $(\rho\oplus\overline{\rho},J_\H)$ is irreducible.
If $\rho$ is quaternionic, then $\rho\oplus\overline{\rho}=\rho\oplus\rho$ admits an antiinvolution of the second kind $J_\rho\oplus J_\rho$. By \cref{lem:conjugateequivalent} $(\rho\oplus\overline{\rho},J_\H)$ is equivalent to $(\rho,J_\rho)\oplus(\rho,J_\rho)$. Since $\rho$ does not admit an antiinvolution of the first kind by \cref{lem:oneequivalent}, $(\rho\oplus\overline{\rho},J_\R)$ is irreducible.
\end{proof}

Now we can conclude complete reducibility of $\H$-representations.

\begin{theorem}\label{thm:completelyreducible}
Every $\H$-representation $(\rho,J)$ of a real semisimple Lie algebra $\gog$ decomposes as a direct sum of irreducible $\H$-representations.
\end{theorem}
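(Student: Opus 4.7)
The plan is to combine the structural results of this section into a short chain of reductions, without introducing any new computation. The key observation is that \cref{lem:reduciblecandidates} already decomposes $(\rho,J)$ into $\H$-summands of only two possible shapes: either (i) $(\rho_i, J_i)$ with $\rho_i$ an irreducible quaternionic $\C$-representation, or (ii) $(\phi_i \oplus \overline{\phi_i}, J_i)$ with $\phi_i$ an irreducible $\C$-representation and $J_i$ an antiinvolution of the second kind on the sum. Thus the statement reduces to showing that each such summand is itself a direct sum of irreducible $\H$-representations.

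First I would dispatch summands of type (i) immediately via \cref{cor:oneirreducible}, which asserts that an irreducible $\C$-representation equipped with any admissible antiinvolution is automatically irreducible as an $\H$-representation. For summands of type (ii), I would invoke \cref{lem:conjugateequivalent} to replace $J_i$ by the canonical antiinvolution $J_\H$ up to equivalence of $\H$-representations, and then split into cases according to the type of $\phi_i$ using \cref{cor:conjugateirreducible}: if $\phi_i$ is not self-conjugated or is real, then $(\phi_i \oplus \overline{\phi_i}, J_\H)$ is already irreducible; if $\phi_i$ is quaternionic, then $(\phi_i \oplus \overline{\phi_i}, J_\H)$ is equivalent to $(\phi_i, J_{\phi_i}) \oplus (\phi_i, J_{\phi_i})$, and each factor is irreducible once more by \cref{cor:oneirreducible}. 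Concatenating the resulting decompositions across all summands produces the desired direct-sum decomposition of $(\rho,J)$.

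The only potential obstacle here is bookkeeping rather than mathematics. One must verify that the three cases enumerated in \cref{cor:conjugateirreducible} truly exhaust the behavior of $\phi_i$ (they do, since an irreducible $\C$-representation is either non-self-conjugated, real, or quaternionic), and that equivalences of $\H$-representations can be pieced together across direct summands. Both points are immediate, so the proof will in effect be a two-line invocation of the preceding machinery.
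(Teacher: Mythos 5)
Your proposal is correct and follows essentially the same route as the paper's proof: decompose via \cref{lem:reduciblecandidates}, handle irreducible quaternionic summands with \cref{cor:oneirreducible}, and normalize the $\phi\oplus\overline{\phi}$ summands to $J_\H$ via \cref{lem:conjugateequivalent} before applying the case analysis of \cref{cor:conjugateirreducible}. No gaps.
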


\begin{proof}
By \cref{lem:reduciblecandidates} any $\H$-representation $(\rho,J)$ decomposes as a direct sum of $\H$-representations $(\rho_i,J_i)$, which are of one of the following two types:\begin{enumerate}
\item $\rho_i$ is a quaternionic irreducible $\C$-representation.
\item $\rho_i=\phi\oplus \overline{\phi}$ for an irreducible $\C$-representation $\phi$.
\end{enumerate} In the first case, $(\rho_i,J_i)$ is equivalent to $(\rho_i,J_{\rho_i})$ by \cref{lem:oneequivalent}, which is irreducible by \cref{cor:oneirreducible}. In the second case, $(\rho_i,J_i)$ is equivalent to $(\phi\oplus \overline{\phi},J_\H)$ by \cref{lem:conjugateequivalent}. If $\phi$ is quaternionic, $(\phi\oplus \overline{\phi},J_\H)$ is equivalent to $(\phi,J_{\phi})\oplus (\phi,J_{\phi})$ by \cref{cor:conjugateirreducible}. So $(\rho_i,J_i)$ decomposes as a direct sum of irreducible $\H$-representations by \cref{cor:oneirreducible}. If $\phi$ is not quaternionic, then $(\rho_i,J_i)$ is irreducible by \cref{cor:conjugateirreducible}.
\end{proof}

This proof works analogously for $\R$-representations (see p. 504 in \cite{Knapp2002} for a different proof for $\R$-representations).

\begin{theorem}\label{thm:irreduciblereps}
Every irreducible $\H$-representation $(\rho,J)$ is one of the following:\begin{enumerate}
\item $\rho$ is a quaternionic irreducible representation and $J=J_\rho$.
\item $\rho=\phi\oplus\overline{\phi}$, where $\phi$ is a real or not self-conjugated irreducible representation  and $J=J_\H$.
\end{enumerate} Every irreducible $\R$-representation $(\rho,J)$ is one of the following:\begin{enumerate}
\item $\rho$ is a real irreducible representation and $J=J_\rho$.
\item $\rho=\phi\oplus\overline{\phi}$, where $\phi$ is a quaternionic or not self-conjugated irreducible representation and $J=J_\R$.
\end{enumerate}
\end{theorem}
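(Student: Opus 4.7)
The plan is to synthesize the previous structural lemmas and corollaries. Start with an irreducible $\H$-representation $(\rho,J)$ and apply \cref{lem:reduciblecandidates}: the underlying $\C$-representation $\rho$ decomposes as a sum of summands, each of which is either an irreducible quaternionic $\C$-representation or of the form $\phi\oplus\overline{\phi}$ for an irreducible $\phi$, and $J$ restricts to an antiinvolution on each summand. Because $(\rho,J)$ is assumed irreducible as an $\H$-representation, this decomposition can only have a single summand, which leaves exactly two cases to analyze.

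In the first case, $\rho$ is an irreducible quaternionic $\C$-representation. \cref{lem:oneequivalent} says that its antiinvolution is unique up to equivalence, so $(\rho,J)$ is equivalent to $(\rho,J_\rho)$, which is case~(1). In the second case, $\rho=\phi\oplus\overline\phi$ for an irreducible $\C$-representation $\phi$; \cref{lem:conjugateequivalent} then forces $J$ to be equivalent to $J_\H$. At this point \cref{cor:conjugateirreducible} decides which $\phi$ are admissible: if $\phi$ were quaternionic, then $(\phi\oplus\overline\phi,J_\H)$ would split as $(\phi,J_\phi)\oplus(\phi,J_\phi)$, contradicting irreducibility. So $\phi$ must be real or not self-conjugated, which gives case~(2); and in both of those subcases \cref{cor:conjugateirreducible} confirms that $(\phi\oplus\overline\phi,J_\H)$ is indeed irreducible, so both branches of the classification actually occur.

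The $\R$-representation statement is proved by the same argument with ``first kind'' and ``second kind'' interchanged. The decomposition of \cref{lem:reduciblecandidates} is stated for $\H$-representations, but its proof only uses $\rho$-equivariance of the antiinvolution together with Weyl's theorem, so an identical argument decomposes an irreducible $\R$-representation into a single summand of one of the two analogous types. Then \cref{lem:oneequivalent}, \cref{lem:conjugateequivalent}, and \cref{cor:conjugateirreducible} plug in with the roles of real and quaternionic swapped, eliminating real $\phi$ from the $\phi\oplus\overline\phi$ case and leaving exactly the two possibilities listed.

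There is no real obstacle here: all the delicate work has been carried out in \cref{lem:reduciblecandidates}, \cref{lem:oneequivalent}, \cref{lem:conjugateequivalent}, and \cref{cor:conjugateirreducible}, and the theorem is essentially an organizational packaging of them. The only thing that needs a moment's care is the observation that an irreducible $(\rho,J)$ cannot decompose further, so \cref{lem:reduciblecandidates} contributes a single summand — after that, matching the summand against the available antiinvolutions via \cref{cor:conjugateirreducible} exhausts the classification.
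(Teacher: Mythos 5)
Your proposal is correct and follows essentially the same route as the paper: the paper's own proof simply cites \cref{cor:oneirreducible} and \cref{cor:conjugateirreducible} for irreducibility of the listed types and then observes that the proof of \cref{thm:completelyreducible} (which is exactly your decomposition via \cref{lem:reduciblecandidates} followed by the case analysis with \cref{lem:oneequivalent}, \cref{lem:conjugateequivalent}, and \cref{cor:conjugateirreducible}) forces any irreducible $(\rho,J)$ to be a single summand of one of these types. Your unpacking of that reference, including the elimination of quaternionic $\phi$ in the $\phi\oplus\overline{\phi}$ case and the remark that the argument transfers verbatim to $\R$-representations, matches the paper's intent exactly.
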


\begin{proof}
We know by \cref{cor:oneirreducible} and \cref{cor:conjugateirreducible} that the representations given in the statement are irreducible. The proof of \cref{thm:completelyreducible} shows that every $\H$-representation (resp. $\R$-representation) decomposes as a direct sum of these representations. So they are all irreducible $\H$-representation (resp. $\R$-representation).
\end{proof}

We also get a posteriori a reduction from $\H$- and $\R$-representations to quaternionic and real representations.

\begin{theorem}\label{thm:RHequivalentiffCequivalent}
A quaternionic (resp. real) representation $\rho$ admits a unique antiinvolution of second (resp. first) kind up to equivalence.
\end{theorem}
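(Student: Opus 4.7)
The plan is to deduce this as an immediate consequence of complete reducibility (\cref{thm:completelyreducible}) together with the classification of irreducible $\H$-representations (\cref{thm:irreduciblereps}), by showing that the resulting decomposition into $\H$-irreducibles depends only on the underlying $\C$-representation $\rho$, not on the chosen antiinvolution.

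First, I would decompose $\rho$ as a $\C$-representation into isotypic components $\rho\cong\bigoplus_\psi \psi^{\oplus m_\psi}$, where $\psi$ ranges over equivalence classes of irreducible $\C$-representations. The next step is the key observation: if $J$ is any antilinear equivariant self-map of $\rho$, then $J$ sends the $\psi$-isotypic component to the $\bar\psi$-isotypic component (because $J$ restricted to any irreducible $\C$-subrepresentation $V\cong\psi$ is an antilinear equivariant bijection onto its image, so $J(V)\cong\bar V\cong\bar\psi$). In particular, $m_\psi=m_{\bar\psi}$, and $J$ preserves the isotypic component of any self-conjugate $\psi$.

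Now let $J,J'$ be two antiinvolutions of the second kind on $\rho$. Applying \cref{thm:completelyreducible} to each, I decompose $(\rho,J)$ and $(\rho,J')$ as direct sums of irreducible $\H$-representations. By \cref{thm:irreduciblereps} combined with \cref{cor:conjugateirreducible}, the multiplicity of each type of $\H$-irreducible in either decomposition is forced by the numbers $m_\psi$ alone: every quaternionic $\psi$ contributes exactly $m_\psi$ copies of $(\psi,J_\psi)$; every real $\psi$ contributes $m_\psi/2$ copies of $(\psi\oplus\psi,J_\H)$ (with $m_\psi$ necessarily even, as is consistent with $J^2=-1$); and each conjugate pair $\{\phi,\bar\phi\}$ of non-self-conjugate irreducibles contributes $m_\phi$ copies of $(\phi\oplus\bar\phi,J_\H)$. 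Since this list depends only on $\rho$, the $\H$-representations $(\rho,J)$ and $(\rho,J')$ are isomorphic. The real case is identical after interchanging the roles of first and second kind.

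The main subtlety to address is that the partition appearing in the proof of \cref{lem:reduciblecandidates} is not canonical: when an irreducible $\phi$ appears with multiplicity greater than one, different $J$'s may pair different copies of $\phi$ with different copies of $\bar\phi$, and for a real $\psi$ different $J$'s may decompose $\psi^{\oplus m_\psi}$ into pairs differently. The point is that \cref{cor:conjugateirreducible} and \cref{lem:conjugateequivalent} guarantee that each such pairing yields the same $\H$-irreducible up to equivalence, so only the total counts matter, and these are determined by $\rho$.
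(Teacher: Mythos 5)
Your proof is correct and follows essentially the same route as the paper: reduce via \cref{thm:completelyreducible} to the irreducible $\H$-representations classified in \cref{thm:irreduciblereps}, whose uniqueness up to equivalence was already established in \cref{lem:oneequivalent} and \cref{lem:conjugateequivalent}. Your isotypic multiplicity count (showing the list of $\H$-irreducible constituents is determined by $\rho$ alone, independently of the non-canonical pairing) is a welcome elaboration of the reduction step, which the paper's one-line proof leaves implicit.
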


\begin{proof}
By \cref{thm:completelyreducible} it is enough to prove the statement for irreducible $\H$-representations (or $\R$-representations). By \cref{thm:irreduciblereps} we already did so proving \cref{lem:oneequivalent} and \cref{lem:conjugateequivalent}.
\end{proof}

In particular, two $\H$-representations (or $\R$-representations) $(\rho_1,J_1)$, $(\rho_2,J_2)$ are equivalent if and only if the $\C$-representations $\rho_1$, $\rho_2$ are equivalent. So it is enough to determine which irreducible $\C$-representations of a semisimple real Lie algebra $\gog$ are quaternionic to classify the irreducible $\H$-representations of $\gog$ and therefore all $\H$-representations of $\gog$. There are advanced techniques to do so in slightly different contexts (for compact groups see Chapter 26 in \cite{FultonHarris1991}, for real reductive algebraic group see \cite{Cui2016}). For our purposes the following is enough.

\subsection*{Quaternionic representations}

We reduce to the case of a simple Lie algebra $\gog$.

\begin{lemma}[Lemma 6 in \cite{Iwahori1959}]\label{lem:Iwahori6}
Let $\rho$ be an irreducible representation of a real semisimple Lie algebra $\gog_1\oplus\dots\oplus\gog_k$. We denote by $\rho_i$ the induced irreducible representation of $\gog_i$. Then $\rho$ is self-conjugated if $\rho_i$ is self-conjugated for all $i=1\dots k$. We have $\varepsilon(\rho)=\varepsilon(\rho_1)\dots \varepsilon(\rho_k)$.
\end{lemma}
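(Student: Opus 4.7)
The plan is to reduce everything to the tensor product decomposition of an irreducible representation of a direct sum, and then to exhibit an antiinvolution on $\rho$ as the tensor product of antiinvolutions on the factors.

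\emph{Step 1 (tensor decomposition).} First I would show that any irreducible $\C$-representation $\rho$ of $\gog=\gog_1\oplus\dots\oplus\gog_k$ is equivalent to $\rho_1\otimes_\C\cdots\otimes_\C\rho_k$, where $\rho_i$ is an irreducible $\C$-representation of $\gog_i$. By induction it suffices to handle $k=2$. Decompose the underlying $\gog_1$-module as $V=\bigoplus_\alpha W_\alpha\otimes_\C U_\alpha$, where the $W_\alpha$ are the distinct irreducible $\gog_1$-isotypic components and $U_\alpha$ are the multiplicity spaces. Since $\gog_2$ commutes with $\gog_1$ in $\End_\C(V)$, it preserves the isotypic decomposition and acts on the $U_\alpha$. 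Irreducibility of $\rho$ forces a single $\alpha$, and it forces the $\gog_2$-action on $U_\alpha$ to be irreducible. Uniqueness of the tensor factors (up to equivalence) follows from Schur's Lemma applied to an intertwiner and unfolding the isotypic decompositions.

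\emph{Step 2 (behavior of conjugation).} By construction the conjugate satisfies $\overline{\rho_1\otimes\cdots\otimes\rho_k}\cong\overline{\rho_1}\otimes\cdots\otimes\overline{\rho_k}$. Combining with the uniqueness in Step~1, $\rho$ is self-conjugated if and only if each $\rho_i$ is self-conjugated; this gives both implications of the self-conjugacy statement (and, in particular, shows that if some $\rho_i$ is not self-conjugated then $\varepsilon(\rho)=\varnothing=\varepsilon(\rho_1)\cdots\varepsilon(\rho_k)$).

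\emph{Step 3 (constructing the antiinvolution).} Assume every $\rho_i$ is self-conjugated, so by \cref{lem:oneequivalent} each admits an essentially unique antiinvolution $J_{\rho_i}$ with $J_{\rho_i}^2=\varepsilon(\rho_i)\cdot\Id$. Define $J:V_1\otimes\cdots\otimes V_k\to V_1\otimes\cdots\otimes V_k$ on simple tensors by
\begin{equation*}
J(v_1\otimes\cdots\otimes v_k)=J_{\rho_1}(v_1)\otimes\cdots\otimes J_{\rho_k}(v_k)
\end{equation*}
and extend antilinearly. Well-definedness follows because moving a scalar $z\in\C$ across any tensor slot produces an equal $\overline{z}$ after applying the $J_{\rho_i}$, as each factor is antilinear. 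Equivariance with respect to $\rho=\rho_1\otimes\cdots\otimes\rho_k$ is immediate: for $X\in\gog_i$, the operator $\rho(X)$ acts only on the $i$-th slot, where it commutes with $J_{\rho_i}$ and with the identity on the other slots. Finally, evaluating on a simple tensor gives
\begin{equation*}
J^2(v_1\otimes\cdots\otimes v_k)=J_{\rho_1}^2(v_1)\otimes\cdots\otimes J_{\rho_k}^2(v_k)=\bigl(\varepsilon(\rho_1)\cdots\varepsilon(\rho_k)\bigr)\,v_1\otimes\cdots\otimes v_k,
\end{equation*}
so $J$ is an antiinvolution on $\rho$ of the first kind when $\prod\varepsilon(\rho_i)=1$ and of the second kind when $\prod\varepsilon(\rho_i)=-1$. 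This yields $\varepsilon(\rho)=\varepsilon(\rho_1)\cdots\varepsilon(\rho_k)$.

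The only delicate step is Step~1: extracting the tensor decomposition and the uniqueness of its factors from Schur's Lemma over $\C$. Once that is in hand, Steps~2 and~3 are formal, since tensor products of antilinear equivariant maps behave exactly as needed for the $\varepsilon$-calculus.
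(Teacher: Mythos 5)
Your argument is correct. Note that the paper does not prove this statement at all --- it is quoted as Lemma~6 of \cite{Iwahori1959} --- so there is no in-paper proof to compare against; what you have written is essentially the standard (and Iwahori's) argument: tensor decomposition of irreducibles of a direct sum via isotypic components and Schur's Lemma, the natural isomorphism $\overline{\rho_1\otimes\cdots\otimes\rho_k}\cong\overline{\rho_1}\otimes\cdots\otimes\overline{\rho_k}$ together with uniqueness of tensor factors to settle self-conjugacy, and the tensor product $J_{\rho_1}\otimes\cdots\otimes J_{\rho_k}$ of antiinvolutions to compute $\varepsilon$. The only point worth tightening is the well-definedness of $J$ in Step~3: the cleanest phrasing is that each antilinear $J_{\rho_i}:V_i\to V_i$ is a $\C$-linear map $\overline{V_i}\to V_i$, so their tensor product is a linear map $\overline{V_1}\otimes\cdots\otimes\overline{V_k}\to V_1\otimes\cdots\otimes V_k$, which you then precompose with the canonical identification $\overline{V_1\otimes\cdots\otimes V_k}\cong\overline{V_1}\otimes\cdots\otimes\overline{V_k}$ from Step~2; this avoids any hand-waving about moving scalars across slots. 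Also observe that your conclusion $\varepsilon(\rho)=\varepsilon(\rho_1)\cdots\varepsilon(\rho_k)$ implicitly relies on \cref{lem:oneequivalent} (an irreducible $\C$-representation cannot be both real and quaternionic), which is available in the paper, so that is fine.
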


So from now on we can assume that $\gog$ is a real simple Lie algebra. We want to pass from ($\R$-linear) representations of the real simple Lie algebra $\gog$ on complex vector spaces to ($\C$-linear) representations of its complexification ${\gog_\C}$. Restriction in one and complex linear extension in the other direction give us a one to one correspondence between representations of $\gog$ and its complexification $\gog_\C$. This correspondence preserves irreducibility. For the complex semisimple Lie algebra $\gog_\C$ we choose simple roots $(\alpha_i)_{i=1}^n$. This gives us a partial ordering on the space of weights of $\gog_\C$. Every irreducible representation has a highest weight. We denote the irreducible representation of highest weight $\lambda$ by $\rho(\lambda)$. Let $(\omega_i)_{i=1}^n$ be the fundamental weights corresponding to our choice of simple roots. We abbreviate $\rho_i:=\rho(\omega_i)$. Any dominant integral weight can be written uniquely as a non-negative integer combination of fundamental weights $\lambda=\sum_{i=1}^n \lambda_i\omega_i$. While the correspondence between representations of $\gog$ and ${\gog_\C}$ preserves irreducibility, it does not even make sense to talk about conjugated, real or quaternionic representations of a complex Lie algebra. So we have to keep a real form $\gog$ in mind.

\begin{definition}
Let $\gog$ be a real form of a complex Lie algebra $\gog_\C$. The \emph{$\gog$-conjugate} $\overline{\rho}^{\gog}$ of a representation $\rho$ of $\gog_\C$ is the complex linear extension of the conjugate of ${\rho}_{\restriction \gog}$. A representation $\rho$ of $\gog_\C$ is\begin{enumerate}
\item \emph{$\gog$-self-conjugated} if $\rho_{\restriction \gog}$ is self-conjugated.
\item \emph{$\gog$-real} if $\rho_{\restriction \gog}$ is real.
\item \emph{$\gog$-quaternionic} if $\rho_{\restriction \gog}$ is quaternionic.
\end{enumerate}
\end{definition}

An irreducible representation of $\gog_\C$ is either $\gog$-real, $\gog$-quaternionic or not $\gog$-self-conjugated by \cref{lem:oneequivalent}. Note that a representation can be both $\gog$-real and $\gog$-quaternionic if it is not irreducible: For example the complex linear extension of $\rho\oplus\overline{\rho}$ for any $\C$-representation $\rho$ of $\gog$ is both $\gog$-real and $\gog$-quaternionic by \cref{lem:conjugateantiinvolutions}.

We will now characterize $\gog$-quaternionic representations in terms of their highest weight. Let $\gog$ be the real form of $\gog_\C$ given by the involution $\sigma$. Let $\rho(\lambda)$ be an irreducible representation of $\gog_\C$ with highest weight $\lambda$. Then the $\gog$-conjugate of $\rho(\lambda)$ is an irreducible representation of $\gog_\C$ with some highest weight $\lambda'$.

\begin{definition}\label{def:twistedgalois}
We define the \emph{twisted Galois action} $t(\sigma)$ on the dominant integral weights of $\gog_\C$ by $t(\sigma)\lambda=\lambda'$.
\end{definition}

This action restricts to fundamental weights (see p.75-76 in \cite{Iwahori1959}), so we can write $t(\sigma)i=i'$ if $t(\sigma)\omega_i=\omega_{i'}$. The involution $t(\sigma)$ is depicted by arrows in the Satake-Tits diagram, it is denoted by $\sigma^*$ in \cite{Tits1966}. It is usually defined in a more general context, for a helpful exposition see Chapter 2 of \cite{Schoeneberg2017}.

Let $R(\gog)$ (resp. $C(\gog)$, $Q(\gog)$) be the set of indices $i$ such that $\rho_i$ is $\gog$-real (resp. not $\gog$-self-conjugated, $\gog$-quaternionic). In our notation we can rephrase Theorem 2 in \cite{Iwahori1959} as follows:

\begin{theorem}[Theorem 2 in \cite{Iwahori1959}]\label{thm:Iwahori2}
The representation $\rho(\lambda)$ of highest weight $\lambda=\sum_{i=1}^n \lambda_i\omega_i$ is \begin{enumthm}
\item\label{lem:realfundamental} $\gog$-real if and only if $\lambda_i=\lambda_{t(\sigma)i}$ for $i\in C(\gog)$ and $\sum_{i\in Q(\gog)} \lambda_i$ is even.
\item\label{lem:complexfundamental} not $\gog$-self-conjugated if and only if $\lambda_i\neq \lambda_{t(\sigma)i}$ for some $i\in C(\gog)$.
\item\label{lem:quaternionfundamental} $\gog$-quaternionic if and only if $\lambda_i=\lambda_{t(\sigma)i}$ for $i\in C(\gog)$ and $\sum_{i\in Q(\gog)} \lambda_i$ is odd.
\end{enumthm}
\end{theorem}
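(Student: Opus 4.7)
The plan is to proceed in two stages. First, I characterize $\gog$-self-conjugacy of $\rho(\lambda)$ combinatorially, yielding (b) and the self-conjugacy conditions in (a) and (c). Second, for self-conjugated $\lambda$, I compute $\varepsilon(\rho(\lambda))$ by realizing $\rho(\lambda)$ as the Cartan summand of an explicit tensor product of fundamental representations equipped with natural antiinvolutions of visible sign.

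\emph{Self-conjugacy.} Since irreducible $\C$-representations of $\gog_\C$ are determined by their highest weight and, by the very definition of the twisted Galois action, $\overline{\rho(\lambda)}^{\gog}$ has highest weight $t(\sigma)\lambda$, the representation $\rho(\lambda)$ is $\gog$-self-conjugated if and only if $t(\sigma)\lambda=\lambda$. Writing $\lambda=\sum\lambda_i\omega_i$ and using that $t(\sigma)$ permutes the fundamental weights with $t(\sigma)^2=1$, this translates to $\lambda_i=\lambda_{t(\sigma)i}$ for every $i$. Since $t(\sigma)$ fixes each $i\in R(\gog)\cup Q(\gog)$ by the very definition of these sets, the condition reduces to $\lambda_i=\lambda_{t(\sigma)i}$ for $i\in C(\gog)$, which is (b).

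\emph{Sign computation.} Assume now $\lambda=t(\sigma)\lambda$. Choose one representative from each $t(\sigma)$-orbit in $C(\gog)$ and form
\[
W=\bigotimes_{i\in R(\gog)\cup Q(\gog)}V_i^{\otimes\lambda_i}\ \otimes\ \bigotimes_{\{i,t(\sigma)i\}\subset C(\gog)}(V_i\otimes V_{t(\sigma)i})^{\otimes\lambda_i}.
\]
Equip each factor of the first block with the tensor power of $J_{\rho_i}$ (essentially unique by \cref{lem:oneequivalent}), and each pair $V_i\otimes V_{t(\sigma)i}\simeq V_i\otimes\overline{V_i}^{\gog}$ with the swap $v\otimes w\mapsto w\otimes v$, a first-kind $\gog$-equivariant antiinvolution analogous to the one in \cref{lem:conjugatereal}. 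A direct calculation shows that for antilinear endomorphisms $J_k$ with $J_k^2=a_k\cdot\mathrm{Id}$ the iterated tensor product is antilinear and satisfies $(\bigotimes_k J_k)^2=(\prod_k a_k)\cdot\mathrm{Id}$. Since only the quaternionic fundamentals contribute the sign $-1$ and the $C(\gog)$-pairs contribute $+1$, the combined antiinvolution $\tilde J$ on $W$ satisfies $\tilde J^2=(-1)^{\sum_{i\in Q(\gog)}\lambda_i}\cdot\mathrm{Id}$.

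It remains to transfer this sign to the Cartan summand $V_\lambda\subset W$, i.e., the irreducible $\gog_\C$-submodule of highest weight $\lambda$ generated by the tensor $v^+$ of highest-weight vectors of the factors. Because each fundamental $V_i$ has a one-dimensional $\omega_i$-weight space, the $\lambda$-weight space of $W$ is one-dimensional, so $V_\lambda$ is the unique irreducible submodule of $W$ of highest weight $\lambda$. Since $\tilde J$ is antilinear and $\gog$-equivariant, $\tilde J V_\lambda$ is a $\gog_\C$-submodule of $W$ isomorphic to $\overline{V_\lambda}^{\gog}\simeq V_{t(\sigma)\lambda}=V_\lambda$, so by uniqueness $\tilde J V_\lambda=V_\lambda$. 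Hence $\tilde J|_{V_\lambda}$ is an antiinvolution on $V_\lambda$ with the same square as $\tilde J$, giving $\varepsilon(\rho(\lambda))=(-1)^{\sum_{i\in Q(\gog)}\lambda_i}$ and proving (a) and (c).

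\emph{Main obstacle.} The sign multiplicativity for tensor products of antilinear maps is elementary once one unwinds the antilinearity carefully. The delicate step is verifying that $\tilde J$ preserves the Cartan component $V_\lambda$ rather than exchanging it with another submodule of the same highest weight. This is precisely where the hypothesis $\lambda=t(\sigma)\lambda$ enters nontrivially: together with the multiplicity-one property of the Cartan summand in $W$, it forces $\tilde J V_\lambda=V_\lambda$ and allows the sign explicitly computed on $W$ to descend to $V_\lambda$.
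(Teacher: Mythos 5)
The paper offers no proof of this statement: it is imported verbatim as Theorem~2 of \cite{Iwahori1959}, so there is no internal argument to compare against. Your proposal is a correct reconstruction of essentially the classical proof (the one in Iwahori and in Chapter~8 of \cite{Onishchik2004}): reduce to the question of when $t(\sigma)\lambda=\lambda$, then compute the Frobenius--Schur-type sign $\varepsilon(\rho(\lambda))$ by building an explicit antiinvolution on a tensor product of fundamental representations and restricting it to the Cartan component. All the key steps check out: the tensor product of $\gog$-equivariant antilinear maps is well defined, antilinear, equivariant, and multiplies the squares; the $\lambda$-weight space of $W$ is one-dimensional because every weight of $W$ is $\leq\lambda$ and each factor has a one-dimensional top weight space, so the Cartan summand has multiplicity one; and $\tilde J V_\lambda$ is a complex (hence $\gog_\C$-) submodule isomorphic to $\overline{V_\lambda}^{\gog}\cong V_{t(\sigma)\lambda}=V_\lambda$, which forces $\tilde J$-invariance and lets the sign descend. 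Combined with the trichotomy of \cref{lem:oneequivalent}, this yields all three equivalences.

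Two points deserve tightening. First, in the self-conjugacy step you pass from $t(\sigma)\lambda=\lambda$ to $\lambda_i=\lambda_{t(\sigma)i}$ using that $t(\sigma)$ is \emph{additive} on dominant weights, whereas you only justify that it permutes the fundamental weights; additivity is standard (the twisted Galois action is induced by a linear automorphism of the weight lattice permuting the simple roots, equivalently one checks that conjugation sends Cartan components to Cartan components), but it is exactly what makes the reduction to fundamental representations work, so it should be stated. Second, the ``swap'' on $V_i\otimes V_{t(\sigma)i}$ should be written via a fixed $\gog_\C$-isomorphism $V_{t(\sigma)i}\cong\overline{V_i}^{\gog}$ as the antilinear map $v\otimes w\mapsto \iota^{-1}(w)\otimes\iota(v)$ with $\iota:V_i\to\overline{V_i}$ the set-identity; one then verifies as in \cref{lem:conjugatereal} that it is well defined on the tensor product over $\C$, equivariant, and squares to $+\Id$. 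Neither point is a genuine obstruction, but both are where the bookkeeping with antilinearity can silently go wrong.
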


So to characterize all irreducible $\gog$-quaternionic representations for some real form $\gog$ of $\gog_\C$, it is enough to check which fundamental representations of $\gog_\C$ are $\gog$-real, which not $\gog$-self-conjugated and which $\gog$-quaternionic. This was done by Tits in \cite{Tits1967} (see also Table 5 in \cite{Onishchik2004}) for all simple Lie algebras $\gog$. 

We can see which fundamental representations are $\gog$-conjugated by looking at the arrows in the Satake-Tits diagram. If the vertex $i$ is not connected to another vertex by an arrow, then $\rho_i$ is $\gog$-self-conjugated and therefore either $\gog$-real or $\gog$-quaternionic. We extract a complete list of $\gog$-quaternionic fundamental representations of simple $\gog_\C$ in \cref{table:quatfundrep} as they are the most important for us. If $\gog$ is not mentioned in \cref{table:quatfundrep}, there are no irreducible $\gog$-quaternionic representations.

\begin{table}[!htbp]\begin{adjustbox}{center}
\begin{tabular}{ |c|c|c|c|c| } \hline
$\gog_\C$ & $\gog$ & Satake-Tits diagram & $Q(\gog)$\\
\hline
$A_n$& \makecell[c]{$\su(n+1-r,r)$\\ for $n$ odd, $0\leq r\leq \frac{n+1}{2}$, \\ $\frac{n+1}{2}+r$ odd} & \adjustbox{valign=m}{\begin{dynkinDiagram}[labels*={1,r,r+1,n-r-1,n-r,n},edge length=1cm]{A}{o.o*.*o.o}\invol{1}{6}\invol{2}{5}\end{dynkinDiagram}}& $\frac{n+1}{2}$\\
$A_n$& \makecell[c]{$\sl_{\frac{n+1}{2}}(\H)$\\ for $n\geq 3$ odd}\rule[-18pt]{0pt}{18pt} & \adjustbox{valign=m}{\begin{dynkinDiagram}[labels*={1,2,3,n-1,n},edge length=.75cm]{A}{*o*.o*}\end{dynkinDiagram}} &$i$ odd\\
$B_n$& \makecell[c]{$\so(2n+1-r,r)$\\ for $0\leq r\leq n$,\\ $n-r=1 \text{ or } 2 \mod 4$}\rule[-28pt]{0pt}{28pt} & \adjustbox{valign=m}{\begin{dynkinDiagram}[labels*={1,r,r+1,n-1,n},edge length=.75cm]{B}{o.o*.**}\end{dynkinDiagram}}& $n$\\
$C_n$& \makecell[c]{$\sp(n-r,r)$\\ for $0\leq r<\frac{n}{2}$}\rule[-18pt]{0pt}{18pt} &\adjustbox{valign=m}{\begin{dynkinDiagram}[labels*={1,2,3,2r,2r+1,n-1,n},edge length=.75cm]{C}{*o*.o*.**}\end{dynkinDiagram}} & $i$ odd \\
$C_n$& \makecell[c]{$\sp(\frac{n}{2},\frac{n}{2})$\\ for $n$ even} &\adjustbox{valign=m}{\begin{dynkinDiagram}[labels*={1,2,3,n-2,n-1,n},edge length=.75cm]{C}{*o*.o*o}\end{dynkinDiagram}} & $i$ odd \\
$D_n$ & \makecell[c]{$\so(2n-r,r)$\\ for $0\leq r\leq n$,\\ $n-r\equiv 2 \mod 4$} &\adjustbox{valign=m}{\begin{dynkinDiagram}[labels={1,2,r,r+1,n-2,n-1,n}, edge length=.75cm]{D}{oo.o*.***}\end{dynkinDiagram}}& $n-1$, $n$\\
$D_n$ & \makecell[c]{$\so^*_{2n}$\\ for $n\geq 6$ even} &\adjustbox{valign=m}{\begin{dynkinDiagram}[labels*={,,,,n-2}, labels={1,2,3,n-3,,n-1,n}, edge length=.75cm]{D}{*o*.*o*o}\end{dynkinDiagram}}& $i$ odd\\
$D_n$ & \makecell[c]{$\so^*_{2n}$\\ for $n\geq 5$ odd} &\adjustbox{valign=m}{\begin{dynkinDiagram}[labels*={,,,,n-2}, labels={1,2,3,n-3,,n-1,n}, edge length=.75cm]{D}{*o*.o*oo}\invold{6}{7}\end{dynkinDiagram}}& $i\neq n$ odd\\
$E_7$ & $\goe_{7(-5)}$ & \adjustbox{valign=m}{\begin{dynkinDiagram}[labels*={6,7,5,4,3,2,1}, backwards, edge length=.75cm]{E}{o*oo*o*}\end{dynkinDiagram}}& $1$,$3$,$7$\\
$E_7$ & $\goe_{7(-133)}$ & \adjustbox{valign=m}{\begin{dynkinDiagram}[labels*={6,7,5,4,3,2,1}, backwards, edge length=.75cm]{E}{*******}\end{dynkinDiagram}}& $1$,$3$,$7$\\
\hline
\end{tabular}\end{adjustbox}
\caption{$\gog$-quaternionic fundamental representations\label{table:quatfundrep}}
\end{table}

\begin{example}\label{ex:Hreps}
Let $\gog=\sl(3,\R)\oplus\su(3,1)$. We give all $\H$-representations of $\gog$. Reversing the story developed in this section, we build the $\H$-representation up from irreducible pieces instead of decomposing a general $\H$-representation.

The Lie algebra $\sl(3,\R)$ is a real form of $A_2$. Its Satake-Tits diagram is \begin{equation*}
\begin{dynkinDiagram}[labels*={1,2},edge length=1cm]{A}{oo}\end{dynkinDiagram},
\end{equation*} so $\rho_1$ and $\rho_2$ are $\sl(3,\R)$-self-conjugated. Since $\sl(3,\R)$ does not appear in \cref{table:quatfundrep}, $\rho_1$ and $\rho_2$ are $\sl(3,\R)$-real. So all irreducible representation of $A_2$ are $\sl(3,\R)$-real by \cref{thm:Iwahori2}. The Lie algebra $\su(3,1)$ is a real form of $A_3$. Its Satake-Tits diagram is \begin{equation*}
\begin{dynkinDiagram}[labels*={1,2,3},edge length=1cm]{A}{o*o}\invol{1}{3}\end{dynkinDiagram},
\end{equation*} so $\rho_1$ and $\rho_3$ are $\su(3,1)$-conjugated. We can read off \cref{table:quatfundrep} that $\rho_2$ is a $\su(3,1)$-quaternionic representation of $A_3$. So by \cref{thm:Iwahori2} the irreducible $\su(3,1)$-quaternionic representations of $A_3$ are those of highest weight $a\omega_2+b(\omega_1+\omega_3)$ for $a$ odd.

Any irreducible $\C$-representation $\phi$ of $\gog=\sl(3,\R)\oplus\su(3,1)$ is a tensor product ${\phi=\phi_1\otimes\phi_2}$ of an irreducible $\C$-representation $\phi_1$ of $\sl(3,\R)$ and an irreducible $\C$-representation $\phi_2$ of $\su(3,1)$. By \cref{lem:Iwahori6} $\varepsilon(\phi)=\varepsilon(\phi_1)\cdot \varepsilon(\phi_2)$. So the quaternionic irreducible $\C$-representations $\phi$ of $\gog$ are of the form $\phi=\phi_1\otimes\phi_2$ for an irreducible $\C$-representation $\phi_1$ of $\sl(3,\R)$ and an irreducible $\C$-representation $\phi_2$ of $\su(3,1)$ that has highest weight $a\omega_2+b(\omega_1+\omega_3)$ for $a$ odd. By \cref{thm:completelyreducible} an irreducible $\H$-representations of $\gog$ is either $(\phi,J_\phi)$ for $\phi$ as above or $(\phi\oplus\overline{\phi},J_\H)$ for ${\phi=\phi_1\otimes\phi_2}$, where $\phi_1$ is an irreducible $\C$-representation of $\sl(3,\R)$ and $\phi_2$ is an irreducible $\C$-representation of $\su(3,1)$ with highest weight $a\omega_2+b\omega_1+c\omega_3$ for $b\neq c$ or $a$ even. By \cref{thm:completelyreducible} any $\H$-representation of $\gog$ is a direct sum of these irreducible $\H$-representations.
\end{example}

\section{Applications}\label{sec:apps}

Now we can use our knowledge about quaternionic representations to get a lower bound on the dimension of a connected manifold with invariant integrable almost quaternionic structure. We denote by $\rho_{\min,\gog}$ the $\gog$-quaternionic representation of minimal dimension of $\gog_\C$. In other words, $\rho_{\min,\gog}$ is the linear extension of the quaternionic representation of $\gog$ of minimal dimension.

\begin{corollary}\label{cor:necessarycondition2}
Let $G$ act non-trivially by quaternionic automorphisms on a connected integrable quaternionic manifold $M$ of dimension $4n$. Then $\dim_\R(\rho_{\min,\gog})\leq 4n+4=\dim(M)+4$.
\end{corollary}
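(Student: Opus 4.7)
My plan is to combine \cref{thm:necessarycondition} with the classification of irreducible $\H$-representations established in \cref{sec:reps}. By \cref{thm:necessarycondition}, the hypothesis supplies a non-trivial Lie algebra homomorphism $\rho \colon \gog \to \sl_{n+1}(\H)$. Composing with the standard inclusion $\sl_{n+1}(\H)\hookrightarrow \End_\H(\H^{n+1})$ produces a non-trivial $\H$-representation $(\rho, J)$ of $\gog$ whose underlying real vector space is $\H^{n+1}$, so $\dim_\R(\rho, J) = 4(n+1) = 4n+4$.

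Next I would apply \cref{thm:completelyreducible} to decompose $(\rho, J)$ as a direct sum of irreducible $\H$-representations. Since $\rho$ is non-trivial, at least one irreducible summand $(\rho_0, J_0)$ must be non-trivial (a direct sum of trivial representations is trivial), and evidently $\dim_\R \rho_0 \leq 4n+4$. By \cref{thm:irreduciblereps}, $\rho_0$ is either itself a quaternionic irreducible $\C$-representation of $\gog$, or of the form $\phi \oplus \overline{\phi}$ with $J_0 = J_\H$ for some real or non-self-conjugated irreducible $\C$-representation $\phi$. In both cases the underlying $\C$-representation $\rho_0$ admits an antiinvolution of the second kind, so $\rho_0$ is a quaternionic $\C$-representation of $\gog$.

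Finally, I would extend $\rho_0$ by complex linearity to a $\C$-representation $\widetilde{\rho}_0$ of $\gog_\C$ on the same complex vector space; this extension is by definition a $\gog$-quaternionic representation of $\gog_\C$, of the same real dimension as $\rho_0$. Minimality of $\rho_{\min,\gog}$ then yields
\[
  \dim_\R \rho_{\min,\gog} \;\leq\; \dim_\R \widetilde{\rho}_0 \;=\; \dim_\R \rho_0 \;\leq\; 4n+4,
\]
which is the claimed bound. I do not anticipate a genuine obstacle here: the corollary is essentially a bookkeeping step that packages \cref{thm:necessarycondition}, complete reducibility of $\H$-representations, and the classification of irreducible $\H$-representations into a single dimension estimate. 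The only point that requires a moment of care is the existence of a non-trivial irreducible summand, which as noted above follows immediately from the non-triviality of the total $\H$-representation.
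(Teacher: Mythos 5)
Your argument is correct and follows essentially the same route as the paper: invoke \cref{thm:necessarycondition} to obtain a non-trivial $\H$-representation of real dimension $4n+4$ and compare with the definition of $\rho_{\min,\gog}$. The detour through \cref{thm:completelyreducible} and \cref{thm:irreduciblereps} to extract a non-trivial irreducible summand is harmless but not needed, since the full representation $\rho:\gog\to\sl_{n+1}(\H)\subset\End_\H(\H^{n+1})$ is already itself a non-trivial quaternionic representation of dimension $4n+4$.
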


\begin{proof}
By \cref{thm:necessarycondition} there is a non-trivial representation $\rho:\gog \to \sl_{n+1}(\H)$. This representation is a $\H$-representation of real dimension $4n+4$. By definition of $\rho_{\min,\gog}$ we get $\dim_\R(\rho_{\min,\gog})\leq 4n+4$.
\end{proof}

\begin{theorem}\label{thm:listsminimalquatreps}
The $\gog$-quaternionic representations of minimal dimension $\rho_{\min,\gog}$ for all real simple Lie algebras $\gog$ are given in \cref{table:minimalquatreps}.
\end{theorem}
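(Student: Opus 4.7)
The plan is to combine the reduction theorems \ref{thm:completelyreducible} and \ref{thm:irreduciblereps} with Iwahori's characterization \ref{thm:Iwahori2}, turning the problem into an elementary inspection of fundamental representations of each real simple Lie algebra.

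By \cref{thm:completelyreducible}, every $\H$-representation of $\gog$ decomposes into irreducibles, so $\rho_{\min,\gog}$ is realized by an irreducible $\H$-representation. By \cref{thm:irreduciblereps}, an irreducible $\H$-representation is of one of two shapes: (i) $(\rho, J_\rho)$ with $\rho$ an irreducible $\gog$-quaternionic $\C$-representation of $\gog_\C$, contributing real dimension $2\dim_\C\rho$; or (ii) $(\phi\oplus\overline\phi, J_\H)$ with $\phi$ an irreducible $\gog$-real or non-$\gog$-self-conjugated $\C$-representation, contributing real dimension $4\dim_\C\phi$. Hence the task is to minimize both quantities separately and take the smaller one.

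For option (i), \cref{thm:Iwahori2}(c) says the admissible highest weights are $\lambda=\sum_i\lambda_i\omega_i$ with $\lambda_i=\lambda_{t(\sigma)i}$ for $i\in C(\gog)$ and $\sum_{i\in Q(\gog)}\lambda_i$ odd. Since every $j\in Q(\gog)$ is fixed by $t(\sigma)$ (because $\rho_j$ is $\gog$-self-conjugated), the weights $\omega_j$ for $j\in Q(\gog)$ are exactly the coordinate-wise minimal elements of this constraint set. Weyl's dimension formula is coordinate-wise monotone in the dominant weight, so the minimum of $\dim_\C\rho(\lambda)$ over $\gog$-quaternionic irreducibles is attained at $\omega_j$ for the $j\in Q(\gog)$ minimizing $\dim_\C\rho_j$. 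Applied to items (a) and (b) of \cref{thm:Iwahori2}, the same argument shows that the minimum for option (ii) is also a fundamental representation $\rho_{j'}$ with $j'\notin Q(\gog)$.

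It then remains to execute the case analysis. For each real simple $\gog$ I read $Q(\gog)$ from \cref{table:quatfundrep} (where $Q(\gog)=\emptyset$ means option (i) is unavailable), look up the dimensions of the candidate fundamental representations using the classical formulas (binomial coefficients for $A_n$-type, spinor dimensions $2^n$ and $2^{n-1}$ for $B_n$ and $D_n$-type, and tabulated values for the exceptional Lie algebras), and take the smaller of $2\min_{j\in Q(\gog)}\dim_\C\rho_j$ and $4\min_{j'\notin Q(\gog)}\dim_\C\rho_{j'}$. The main obstacle I anticipate is the bookkeeping in the borderline cases: for $\sp(p,q)$ the defining quaternionic representation of real dimension $4(p+q)$ wins comfortably via option (i), whereas for some $\so(p,q)$ the non-quaternionic half-spin representation entering option (ii) can rival the quaternionic spin representation from option (i); the two $E_7$ entries require recognizing $\rho_7$ as the $56$-dimensional minuscule representation appearing in \cref{table:quatfundrep}. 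Compiling the winners case by case yields \cref{table:minimalquatreps}.
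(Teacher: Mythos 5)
Your proposal follows essentially the same route as the paper: reduce via \cref{thm:completelyreducible} and \cref{thm:irreduciblereps} to the two shapes of irreducible $\H$-representations, use \cref{thm:Iwahori2} together with \cref{table:quatfundrep} to identify the admissible highest weights, and compare dimensions via the Weyl dimension formula; your explicit appeal to coordinate-wise monotonicity of the Weyl formula is a welcome justification that the paper leaves implicit. One small imprecision: the claim that the option (ii) minimum is always attained at a fundamental representation $\rho_{j'}$ with $j'\notin Q(\gog)$ is not literally true (for $\su(2)$ there is no such $j'$, and in general the minimal non-quaternionic irreducible could be $\rho(2\omega_j)$ with $j\in Q(\gog)$), but this never affects the table, since whenever the overall minimal fundamental representation $\rho_{j_0}$ is $\gog$-quaternionic, option (i) contributes at most $2\dim_\C\rho_{j_0}$ while every option (ii) candidate contributes at least $4\dim_\C\rho_{j_0}$, so option (i) wins regardless.
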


\begin{table}[!htbp]\begin{adjustbox}{center}
\begin{tabular}{ |c|c|c|c|c| } \hline
$\gog_\C$ & $\dim_\R(\gog)$ & $\gog$ & $\rho_{\min,\gog}$ & $\dim_\R(\rho_{\min,\gog})$\\
\hline
$A_1$ & $3$ & $\su(2)$ & $\rho_1$& $4$\\
$A_3$ & $15$ & $\su(3,1)$ & $\rho_2$& $12$\\
$A_n$, $n\geq 3$ odd & $n^2+2n$ & $\sl_{\frac{n+1}{2}}(\H)$ & $\rho_1, \rho_n$ & $2(n+1)$\\
$A_n$ & $n^2+2n$ & any other $\gog$ & $\rho_1\oplus\overline{\rho_1}^{\gog}, \rho_n\oplus\overline{\rho_n}^{\gog}$ & $4(n+1)$\\
$B_2$ & $10$ & $\so(4,1)$ & $\rho_2$ & $8$\\
$B_2$ & $10$ & $\so(5)$ & $\rho_2$& $8$\\
$B_3$ & $21$ & $\so(5,2)$ & $\rho_3$& $16$\\
$B_3$ & $21$ & $\so(6,1)$ & $\rho_3$& $16$\\
$B_4$ & $36$ & $\so(6,3)$ & $\rho_4$& $32$\\
$B_4$ & $36$ & $\so(7,2)$ & $\rho_4$& $32$\\
$B_n$ & $2n^2+n$& any other $\gog$ & $\rho_1\oplus\rho_1$ & $4(2n+1)$\\
$C_n$ & $2n^2+n$& $\sp(n-r,r)$ for $0\leq r\leq \frac{n}{2}$ & $\rho_1$ & $4n$\\
$C_n$ & $2n^2+n$& any other $\gog$ & $\rho_1\oplus\rho_1$ & $8n$\\
$D_4$ & $28$ & $\so(6,2)$ & $\rho_3,\rho_4$ & $16$\\
$D_5$ & $45$ & $\so(7,3)$ & $\rho_4,\rho_5$ & $32$\\
$D_n$, $n\geq 5$ & $2n^2-n$ & $\so^*_{2n}$ & $\rho_1$ & $4n$\\
$D_n$ & $2n^2-n$ & any other $\gog$ & $\rho_1\oplus\rho_1$& $8n$\\
$E_6$ & $78$ & any $\gog$ & $\rho_1\oplus\rho_1$, $\rho_5\oplus\rho_5$ & $108$\\
$E_7$ & $133$ & $\goe_{7(-5)}$ & $\rho_1$ & $112$\\
$E_7$ & $133$ & $\goe_{7(-133)}$ & $\rho_1$ & $112$\\
$E_7$ & $133$ & any other $\gog$ & $\rho_1\oplus\rho_1$ & $224$\\
$E_8$ & $248$ & any $\gog$ & $\rho_1\oplus\rho_1$ & $992$\\
$F_4$ & $52$ & any $\gog$ & $\rho_1\oplus\rho_1$ & $104$\\
$G_2$ & $14$ & any $\gog$ & $\rho_1\oplus\rho_1$ & $28$\\
\hline
\end{tabular}\end{adjustbox}
\caption{$\gog$-quaternionic representations of minimal dimension\label{table:minimalquatreps}}
\end{table}

\begin{proof}
By \cref{thm:completelyreducible} and \cref{thm:irreduciblereps} $\rho_{\min,\gog}$ has to be either an irreducible $\gog$-quaternionic representation or $\rho_{\min,\gog}=\phi\oplus\overline{\phi}^{\gog}$, where $\phi$ is a $\gog$-real or not $\gog$-self-conjugated irreducible representation. Using \cref{thm:Iwahori2} and \cref{table:quatfundrep} we know which irreducible representations are $\gog$-quaternionic. We can compute the dimension of the representations with the Weyl dimension formula. (One can find the dimensions of the fundamental representations in many sources, eg. \cite{Tits1967} or \cite{Onishchik2004}.)

We will give the proof for $\gog=\su(3,1)$ as an example; the proof in the other cases is analogous. $\su(3,1)$ is a real form of $A_3$. We saw in \cref{ex:Hreps} that $\rho_2$ is $\su(3,1)$-quaternionic and that $\rho_1$ and $\rho_3$ are $\su(3,1)$-conjugated. So the irreducible $\su(3,1)$-quaternionic representations of minimal dimension is $\rho_2$, which has real dimension $12$. The representation $\rho_1\oplus \rho_3$ is the representation of minimal dimension of form $\phi\oplus\overline{\phi}^{\gog}$, where $\phi$ is a $\gog$-real or not $\gog$-self-conjugated irreducible representation. The real dimension of $\rho_1\oplus \rho_3$ is $16$. So $\rho_{\min,\su(3,1)}=\rho_2$.
\end{proof}

\cref{cor:necessarycondition2} and \cref{thm:listsminimalquatreps} give us a lower bound on the dimension of connected homogeneous space with $G$-invariant integrable almost quaternionic structure. We will now apply this.

\exceptional

\begin{proof}
Assume otherwise. If a Lie group $G$ acts transitively on a manifold $X$, then $\dim(X)\leq \dim_\R(G)$. So by \cref{cor:necessarycondition2} we get $\dim_\R(\rho_{\min,\gog})\leq \dim(X)+4\leq \dim_\C(\gog_\C)+4$. Using \cref{thm:listsminimalquatreps} we can check that this is not satisfied for any real form of $E_6$, $E_8$, $F_4$ or $G_2$.
\end{proof}

We will now prove \cref{thm:symmetricintegrable}.

\symmetricintegrable

\begin{proof}
Let $G/K$ be a quaternion-K\"ahler symmetric space. If $G/K$ is compact, it is integrable if and only if it is quaternionic projective space by \cref{cor:compactcase}. If $G/K$ is flat, it is quaternionic vector space $\H^n$. So without loss of generality we can assume that $G/K$ is of non-compact type. If $G/K$ has a $G$-invariant integrable almost quaternionic structure, then $\dim_\R(\rho_{\min,\gog})\leq \dim(G/K)+4$ by \cref{cor:necessarycondition2}. We proceed case-by-case using the list of quaternion-K\'ahler symmetric spaces of non-compact type in \cref{table:noncompqKsymspace}.\begin{enumerate}
\item Assume $\gog_\C=A_n$ for $n\geq 2$. Then $\gog=\su(n-1,2)$ and $\dim(G/K)=4(n-1)$. Using \cref{thm:listsminimalquatreps} and the Satake-Tits diagram of $\su(n-1,2)$ we see that the $\su(n-1,2)$-quaternionic representation of minimal dimension is $\rho_1\oplus\rho_n$, which has dimension $4(n+1)$. But $4(n+1)>4(n-1)+4$.
\item Assume $\gog_\C=B_n$ for $n\geq 3$. Then $\gog=\so(2n-3,4)$ and $\dim(G/K)=4(2n-3)$. Using \cref{thm:listsminimalquatreps} we see that the $\so(2n-3,4)$-quaternionic representation of minimal dimension is $\rho_1\oplus\rho_1$, which has dimension $4(2n+1)$. But $4(2n+1)>4(2n-3)+4$.
\item Assume $\gog_\C=B_2$ or $\gog_\C=C_n$. Then $G/K$ is quaternionic hyperbolic space and integrable by \cref{ex:quaternionhyperbolicspace}. Note that the $\sp(n-1,1)$-quaternionic representation of minimal dimension $\rho_1$ actually comes from the action of $\Sp(n-1,1)$ on quaternionic hyperbolic space.
\item Assume $\gog_\C=D_n$. Then $\gog=\so(2n-4,4)$ and $\dim(G/K)=4(2n-4)$. Using \cref{thm:listsminimalquatreps} we see that the $\so(2n-4,4)$-quaternionic representation of minimal dimension is $\rho_1\oplus\rho_1$, which has dimension $8n$. But $8n>4(2n-4)+4$.
\item Assume $\gog_\C=E_7$. Then $\gog=\goe_{7(-5)}$ and $\dim(G/K)=64$. Using \cref{thm:listsminimalquatreps} we see that the $\goe_{7(-5)}$-quaternionic representation of minimal dimension is $\rho_1$, which has dimension $112$. But $112>64+4$.
\end{enumerate}
We do not have to check for $\gog_\C\in\{E_6,E_8,F_4,G_2\}$ by \cref{cor:exceptional}. So if $G/K$ is of non-compact type, it is integrable if and only if it is quaternionic hyperbolic space $H^n_\H$.
\end{proof}

\begin{corollary}
A Riemannian symmetric space $G/K$ of dimension $4n$ for $n\geq 2$ has a $G$-invariant integrable almost quaternionic structure if and only if it is quaternionic vector space $\H^n$, quaternionic hyperbolic space $H^n_\H$ or quaternionic projective space $\mathbb{P}(\H^{n+1})$.
\end{corollary}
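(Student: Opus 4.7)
The plan is to obtain this final corollary as a direct consequence of the two main results already established in the paper. The statement ties together the characterization of invariant almost quaternionic structures in the symmetric setting with the integrability classification.

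First, I would invoke \cref{prop:almostqsymmetricimpliesqK}: a Riemannian symmetric space $G/K$ of dimension $4n$ with $n \geq 2$ admits a $G$-invariant almost quaternionic structure if and only if it is quaternion-K\"ahler. This reduces the question to one about quaternion-K\"ahler symmetric spaces: such a space has a $G$-invariant \emph{integrable} almost quaternionic structure if and only if it has such an integrable structure at all (since the invariant quaternion-K\"ahler structure is unique up to the choice of almost quaternionic structure on the isotropy representation).

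Second, I would apply \cref{thm:symmetricintegrable}, which states that a quaternion-K\"ahler symmetric space is integrable precisely when it is $\H^n$, $H^n_\H$, or $\mathbb{P}(\H^{n+1})$. Combining the two results gives the desired equivalence: a Riemannian symmetric space of dimension $4n$ with $n \geq 2$ admits a $G$-invariant integrable almost quaternionic structure if and only if it lies in this list.

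The only subtlety worth spelling out is that \cref{thm:symmetricintegrable} concerns integrability of \emph{the} quaternion-K\"ahler almost quaternionic structure, while the corollary asks about any $G$-invariant integrable almost quaternionic structure. By \cref{prop:almostqsymmetricimpliesqK} applied inside its own proof, any $G$-invariant almost quaternionic structure forces $K \subset \Sp(n)\cdot \Sp(1)$, and the holonomy bundle then determines the almost quaternionic structure uniquely up to isomorphism on the homogeneous space. Hence the notions coincide and the combined statement follows immediately. There is no real obstacle here beyond checking this identification of structures, since all the hard work is already contained in the referenced results.
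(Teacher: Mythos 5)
Your derivation is correct and matches the paper's (implicit) one: the corollary is stated without proof precisely because it follows immediately by combining \cref{prop:almostqsymmetricimpliesqK} with \cref{thm:symmetricintegrable}, plus the examples showing the three listed spaces do carry invariant integrable structures. The uniqueness digression at the end is unnecessary (and not really justified as stated): no identification of structures is needed, because the dimension bound of \cref{cor:necessarycondition2} underlying the proof of \cref{thm:symmetricintegrable} applies to \emph{any} $G$-invariant integrable almost quaternionic structure on $G/K$, not only the one induced by the quaternion-K\"ahler metric.
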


We stress again that our arguments did not use the Riemannian structure of quaternion-K\"ahler symmetric spaces at all. We only focused in our investigation on quaternion-K\"ahler symmetric spaces because they are the most interesting and most easily accessible $G$-homogeneous spaces with $G$-invariant almost quaternionic structure. Our method is strong enough to prove non-integrability in other cases. We give one more example.

\affineclassical

\begin{proof}
Assume otherwise. By \cref{cor:exceptional}, $\gog$ has to be a real form of $E_7$. We get as above that $\dim_\R(\rho_{\min,\gog})\leq \dim_\R(\gog)-\dim_\R(\goh)+4$. Using \cref{thm:listsminimalquatreps} we see that this can not be satisfied for $\gog=\mathfrak{e}_{7(7)}$ and $\gog=\mathfrak{e}_{7(-25)}$. The irreducible affine symmetric spaces $G/H$ with $\gog=\mathfrak{e}_{7(-133)}$ are Riemannian, since $E_{7(-133)}$ is compact. So we handled this case in \cref{thm:symmetricintegrable}. It is left to check $G/H$ with $\gog=\mathfrak{e}_{7(-5)}$ given in Table 2 in \cite{Berger1957}:\begin{itemize}
\item For $\goh=\mathfrak{so}(12)\oplus\mathfrak{sp}(1)$, we get $(\gog,\goh)$ is Riemannian.
\item For $\goh=\mathfrak{e}_{6(-14)}\oplus\R$, we get $112>133-78-1+4=58$.
\item For $\goh=\mathfrak{so}(8,4)\oplus\mathfrak{su}(2)$, we get $112>133-66-3+4=68$.
\item For $\goh=\mathfrak{su}(4,4)$, we get $112>133-63+4=74$.
\item For $\goh=\mathfrak{su}(6,2)$, we get $112>133-63+4=74$.
\item For $\goh=\mathfrak{e}_{6(2)}\oplus\R$, we get $112>133-78-1+4=58$.
\item For $\goh=\mathfrak{so}^*_{12}\oplus\mathfrak{sl}(2,\R)$, we get $112>133-66-3+4=68$.
\end{itemize} This finishes the proof.
\end{proof}

\end{document}